\newtheorem{thm}{Theorem}
\newtheorem*{thmA}{Theorem A (Magyar, Stein, and Wainger \cite{MSW})}
\newtheorem*{thmB}{Theorem B (Magyar \cite{Mag})}
\newtheorem{lemma}{Lemma}
\newcommand{\R}{\ensuremath{\mathbb{R}}}
\newcommand{\Z}{\ensuremath{\mathbb{Z}}}
\newcommand{\C}{\ensuremath{\mathbb{C}}}
\newcommand{\1}{\ensuremath{\mathbf{1}}}
\newcommand{\bfa}{\ensuremath{\mathbf{a}}}
\newcommand{\bfq}{\ensuremath{\mathbf{q}}}
\newcommand{\<}{\ensuremath{\lesssim}}
\newcommand{\gs}{\ensuremath{\gtrsim}}
\newcommand{\eps}{\ensuremath{\varepsilon}}
\newcommand{\la}{\ensuremath{\lambda}}
\newcommand{\what}{\ensuremath{\widehat{\omega}_N}}
\newcommand{\ds}{\ensuremath{\widetilde{d\sigma}}}
\newcommand{\dsN}{\ensuremath{\widetilde{d\sigma}_N}}
\newcommand{\dsk}{\ensuremath{\widetilde{d\sigma_k}}}
\newcommand{\eq}{\begin{equation}}
\newcommand{\ee}{\end{equation}}
\newcommand{\p}{\ensuremath{\mathfrak{p}}}
\numberwithin{equation}{section}
\begin{document}
\title[Maximal Function Ineq. and a Thm of Birch]{Maximal Function Inequalities and a Theorem of Birch}
\author{Brian Cook}

\thanks{2010 Mathematics Subject Classification. 11D72, 42B25.\\
The author was supported by the Fields Institute and by NSF grant DMS1147523}

\begin{abstract} In this paper we prove an analogue of the discrete spherical maximal theorem of Magyar, Stein, and Wainger, an analogue which concerns maximal functions associated to homogenous algebraic hypersurfaces. Let $\p$ be a homogenous polynomial in $n$ variables with integer coefficients of degree $d>1$. The maximal functions we consider are defined by \[
A_*f(y)=\sup_{N\geq1}\left|\frac{1}{r(N)}\sum_{\p(x)=0;\,x\in[N]^n}f(y-x)\right|\]
for functions $f:\Z^n\to\mathbb{C}$, where $[N]=\{-N,-N+1,...,N\}$ and $r(N)$ represents the number of integral points on the surface defined by $\p(x)=0$ inside the $n$-cube $[N]^n.$
It is shown here that the operators $A_*$ are bounded on $\ell^p$ in the optimal range $p>1$ under certain regularity assumptions on the polynomial $\p$.
\end{abstract}

\maketitle

\section{Introduction}
\subsection{Results}
  In \cite{MSW}  Magyar, Stein, and Wainger provided a number theoretic analogue to Stein's well known spherical maximal theorem on $\R^n$. Let $|x|^2=x_1^2+...+x_n^2$ for $x\in \Z^n$ and for a fixed integer $\la>0$ define the operators 
\[
S_\la f(y)=\frac{1}{r(\la)}\sum_{|x|^2=\la} f(y-x)
\]
for $f:\Z^n\to\mathbb{C}$. Here $r(\la)$ is simply the number of representations of $\lambda$ as a sum of $n$ squares of integers. Of interest is the maximal function given by
\[
S_* f(y)=\sup_{\la\geq1}|S_\la f(y)|.
\]
For a function $f$ defined on $\Z^n$ we use the notation $||f||_{\ell^p}$ to denote the  norm $
\left(\sum_{x\in \Z^n}|f(x)|^p\right)^{1/p}$. 

\begin{thmA}\label{thmA}
Let $p> 1$ be a fixed real number. There is a constant $C$ such that 
\[
||S_*f||_{\ell_p}\leq C||f||_{\ell^p},
\]
for all $f\in \ell^p$, i.e. $S_*$ is bounded on $\ell^p$, if and only if $n\geq 5$ and  $p>n/(n-2)$.
\end{thmA}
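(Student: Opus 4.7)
The plan is to handle sufficiency via the circle method combined with transference from the continuous spherical maximal theorem, and necessity via explicit testing against the delta function at the origin.

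For the sufficiency direction, I would begin by passing to the multiplier side. Writing $S_\lambda f = f \ast K_\lambda$ where $K_\lambda = r(\lambda)^{-1}\sum_{|x|^2 = \lambda}\delta_x$, the key is to obtain a workable asymptotic formula for the Fourier coefficient $m_\lambda(\xi) = \widehat{K_\lambda}(\xi)$ on the torus $\mathbb{T}^n$. By the Hardy–Littlewood circle method applied to the generating function $\sum_{x \in [N]^n} e(\xi \cdot x + t|x|^2)$ (with $N \sim \sqrt{\lambda}$), I would produce an approximation
\[
m_\lambda(\xi) \;\approx\; \sum_{q=1}^{Q} \sum_{\substack{a=1 \\ (a,q)=1}}^{q} \sum_{\substack{\bfa \in [q]^n \\ (\bfa,q)=1}}^{*} q^{-n}G(a,q)\,\psi(q(\xi - \bfa/q))\,\widetilde{d\sigma}_\lambda(\xi - \bfa/q),
\]
where $G(a,q)$ is a normalized complete Gauss sum, $\widetilde{d\sigma}_\lambda$ is (essentially) the Fourier transform of the surface measure on the Euclidean sphere of radius $\sqrt{\lambda}$ extended to $\mathbb{R}^n$, and $\psi$ is a smooth bump supported at scale $\lesssim 1/q$. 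A minor arc argument (based on Weyl-type bounds) would show the error is bounded in $\ell^2 \to \ell^2$ by $\lambda^{-\delta}$ for some $\delta>0$, which after the routine maximal-to-dyadic reduction handles the minor arc contribution even by brute summation, at least for $p$ near $2$.

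The main term requires the two-step analysis that is the heart of the Magyar–Stein–Wainger argument. First, one invokes Magyar's lemma on the transference of $\ell^p$-bounds for multipliers that are products of a smooth factor localized near rationals and a dilation of a fixed continuous multiplier. Concretely, one writes the major arc piece as $\sum_{q \leq Q} T_q$, where $T_q$ is a discrete operator whose multiplier is supported on $1/q$-neighborhoods of the rational points $\bfa/q$ with $(\bfa,q)=1$, and whose symbol on each such neighborhood is the Gauss sum factor times a translate of the continuous spherical multiplier $\widetilde{d\sigma}_\lambda$. By transference, the maximal function $\sup_\lambda |T_q f|$ is controlled in $\ell^p$ by the Euclidean spherical maximal operator $\sup_t |f \ast d\sigma_t|$ (Stein's theorem, valid for $p > n/(n-1)$ when $n \geq 3$), multiplied by an arithmetic factor coming from $\ell^p$-bounds on a Bourgain–Stein–Wainger type multiplier involving the Gauss sums. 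The hardest estimate, and the one that pins down the exponent $p > n/(n-2)$, is the $\ell^p \to \ell^p$ norm estimate on the arithmetic piece: using the bound $|G(a,q)| \lesssim q^{-n/2}$ together with a careful interpolation between a trivial $\ell^\infty$ estimate and an $\ell^2$ estimate exploiting orthogonality of the rationals with denominator $q$, one shows $\|T_q\|_{\ell^p \to \ell^p} \lesssim q^{-n(1-2/p)/2 + \varepsilon}$. This is summable over $q \geq 1$ precisely when $p > n/(n-2)$, and also forces $n \geq 5$ in order to combine cleanly with the $p > n/(n-1)$ threshold for the continuous spherical maximal function.

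For the necessity direction, taking $f = \delta_0$ one computes $S_* \delta_0(y) = \sup_{\lambda} r(\lambda)^{-1} \1_{|y|^2 \in \text{values}}$. Standard divisor-type estimates give $r(\lambda) \lesssim \lambda^{(n-2)/2 + \varepsilon}$, so on the sphere of radius $\sqrt{\lambda}$ the maximal function is $\gtrsim \lambda^{-(n-2)/2 - \varepsilon}$, and raising to the $p$-th power and summing over $\lambda$ with a lower bound for $r(\lambda)$ for representable $\lambda$ yields $\|S_*\delta_0\|_{\ell^p}^p = \infty$ when $p \leq n/(n-2)$. The constraint $n \geq 5$ comes from the fact that even the $\ell^2$-bound fails for $n \leq 4$, which is seen by similar testing.

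The main obstacle in carrying this out rigorously is the arithmetic part of the major arcs: controlling the sum over $q$ of the $\ell^p$-operator norms of the Gauss sum multipliers requires a delicate combination of Plancherel in the $\ell^2$ endpoint with a nontrivial pointwise estimate, and it is exactly at this step that the threshold $p > n/(n-2)$ (as opposed to the weaker $p > n/(n-1)$ coming from the Euclidean side) is forced.
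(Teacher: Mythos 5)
Your overall strategy is the one the paper itself outlines in Section 1.2 for Theorem A (circle-method approximation of the multipliers, transference of the major arcs to Stein's continuous spherical maximal operator, summation over denominators, and the delta-function example of Ionescu for necessity), and your necessity argument is essentially correct. The problem is quantitative, and it sits exactly at the step you yourself identify as the heart of the matter. The bound you assert, $\|T_q\|_{\ell^p\to\ell^p}\lesssim q^{-n(1-2/p)/2+\varepsilon}$, is summable in $q$ only when $n(1-2/p)/2>1$, i.e. when $p>2n/(n-2)$, so your conclusion ``summable precisely when $p>n/(n-2)$'' does not follow from your own estimate. The estimate actually needed is the one recorded as \eqref{1.22} in the paper: $\|M_*^{a/q}\|_{\ell^p\to\ell^p}\lesssim q^{-n(1-1/p)+\varepsilon}$ uniformly in $a\in U_q$. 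This comes from interpolating the $\ell^2$ gain $q^{-n/2}$ (Gauss sum decay) against an $\ell^1$-type bound $O(1)$ -- not against $\ell^\infty$: interpolation of $\ell^2$ with $\ell^\infty$ only reaches $p\ge 2$, whereas the critical range $n/(n-2)<p<2$ lies below $2$ (one can get below $2$ by duality for convolution operators, but the resulting exponent is $q^{-n(1-1/p)}$, not yours). After the triangle inequality over the $\phi(q)\sim q$ residues $a$ and the sum over $q$, convergence requires $n(1-1/p)>2$, which is exactly $p>n/(n-2)$, and at the $\ell^2$ endpoint also forces $n\ge 5$; your bookkeeping loses this.

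The second gap is the error term. A uniform multiplier bound $|e_\lambda|\lesssim\lambda^{-\delta}$ cannot be handled ``by brute summation'' over a dyadic block: summing $\lambda^{-\delta}$ over the $\sim\lambda_0$ values $\lambda\in[\lambda_0,2\lambda_0)$ gives $\lambda_0^{1-\delta}$, which diverges, and in any case an argument valid only ``for $p$ near $2$'' does not prove the theorem, because the sharp range $n/(n-2)<p<2$ is precisely what is at stake and the error must be summed over all dyadic blocks for every such $p$. The missing ingredients are the paper's steps iii and iv: the partial (dyadic-block) maximal inequality of Magyar \cite{Mag0}, $\|\sup_{\lambda_0\le\lambda<2\lambda_0}|S_\lambda f|\|_{\ell^p}\lesssim\|f\|_{\ell^p}$ for all $p>n/(n-2)$, which via $E_\lambda=S_\lambda-M_\lambda$ gives an $O(1)$ bound in $\ell^p$ for the block supremum of the error, interpolated against an $\ell^2$ bound of size $\lambda_0^{-\delta}$ for that same block supremum (which in \cite{MSW} is extracted from the circle-method approximation, not from the pointwise bound alone); only then is the resulting geometric decay summable over blocks. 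As written, your argument would yield boundedness only in a range strictly smaller than $p>n/(n-2)$, so the sharp sufficiency claim is not established.
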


An extension of this result to certain algebraic hypersurfaces of higher degree is  given by Magyar in \cite{Mag}. Let $\p$ be an integral form, i.e., a homogenous polynomial with integral coefficients, in $n$ variables of degree $d>1$. If $\p$ is positive then one can ask a similar question regarding averages over  integral points on the family of surfaces defined by $\p(x)=\la$. Approaching this question requires a knowledge of the set of integral points, and provided that the quantity 
\[
\mathcal{B}(\p)=codim\{z\in \C^n:\partial_{z_1}\p(z)=...=\partial_{z_n}\p(z)=0,\}
\]
(known as the Birch rank) is strictly greater than $(d-1)2^d$ this information is provided by Birch in \cite{Bi}. In particular, one sees that there exists an infinite arithmetic progression $\Gamma_\p$ and nonnegative constants $C,C'$ such that 
\[r(\la)=\sum_{\p(x)=\la} 1\]
satisfies $C\la^{(n/d)-1}\leq r(\la) \leq C'\la^{(n/d)-1}$ for all $\la\in \Gamma_\p$. One then defines the operators
\[
T_\la f(y)=\frac{1}{r(\la)}\sum_{\p(x)=\la} f(y-x)
\]
for $\la\in \Gamma_\p$, and   the maximal  function
\[
T_* f(y)=\sup_{\la\in\Gamma_\p}|T_\la f(y)|.
\]

\begin{thmB}\label{thmB}
If $\p$ is a positive form of degree $d>1$ with $\mathcal{B}(\p)>(d-1)2^d$ then there is a constant $C$ such that 
\[
||T_*f||_{\ell_2}\leq C||f||_{\ell^2}
\]
for all $f\in \ell^2$.
\end{thmB}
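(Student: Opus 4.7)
The plan is to follow the Fourier-analytic circle-method framework of Magyar, Stein, and Wainger. By Plancherel, the $\ell^2$ maximal inequality reduces to a uniform estimate for the convolution operators with symbols
\[
m_\la(\xi) = \frac{1}{r(\la)}\sum_{\p(x)=\la} e^{-2\pi i x\cdot\xi}, \qquad \xi \in \R^n/\Z^n.
\]
Under the hypothesis $\mathcal{B}(\p) > (d-1)2^d$, Birch's version of the Hardy--Littlewood method produces a decomposition $m_\la = M_\la + E_\la$, where the minor-arc remainder $E_\la$ obeys a uniform power savings $\|E_\la\|_{L^\infty(\mathbb{T}^n)} \< \la^{-\delta_0}$, and the major-arc piece has the explicit shape
\[
M_\la(\xi) = \sum_{q \geq 1}\ \sum_{\substack{a \in (\Z/q\Z)^n \\ (a,q)=1}} G_q(a)\, \chi(\xi - a/q)\,\widehat{d\si_\la}(\xi - a/q),
\]
in which $G_q(a)$ is a complete exponential Gauss-type sum obeying $|G_q(a)| \< q^{-\delta_1}$, $\chi$ is a smooth cutoff supported near the origin, and $d\si_\la$ is the normalized surface measure on $\{\p = \la\} \subset \R^n$; the rank assumption guarantees $\delta_0,\delta_1>0$.

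For the error term, the pointwise domination
\[
\sup_{\la \in \G_\p} \bigl|(E_\la \widehat f)^\vee\bigr| \leq \Bigl(\sum_{\la \in \G_\p} \bigl|(E_\la \widehat f)^\vee\bigr|^2\Bigr)^{1/2}
\]
combined with Plancherel yields an admissible $\ell^2$ estimate as long as $\sum_{\la\in \G_\p}\la^{-2\delta_0}$ converges. When the raw Birch exponent falls short, a dyadic frequency decomposition together with the observation that different $\la$ concentrate their mass at different spatial scales should supply enough orthogonality to close the bound.

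For the major arcs, group by denominator and write $M_\la = \sum_q M_\la^{(q)}$. The operator with symbol $M_\la^{(q)}$ is a discrete periodization of a Euclidean multiplier built from $d\si_\la$, weighted by $G_q(a)$. By homogeneity $d\si_\la$ is (up to normalization) the $\la^{1/d}$-dilate of $d\si_1$, so the corresponding continuous operator is dominated by the Euclidean maximal function $f \mapsto \sup_{t>0}|f * d\si_t|$ on $\R^n$. This is $L^2(\R^n)$-bounded because the rank hypothesis forces $\{\p=1\}$ to be smooth away from a low-dimensional singular set and yields negative-power decay of $\widehat{d\si_1}(\xi)$ via stationary phase on the smooth stratum. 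Transferring this continuous bound to the discrete periodic setting, in the manner of Magyar, Stein, and Wainger, produces
\[
\bigl\|\sup_{\la \in \G_\p} |T_\la^{(q)} f|\bigr\|_{\ell^2} \< q^{-\delta_1}\|f\|_{\ell^2},
\]
and summing the resulting convergent series over $q$ handles the major-arc contribution.

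The hard part will be the simultaneous book-keeping of the three power savings produced by the rank hypothesis: the minor-arc exponent $\delta_0$ (needed for summability in $\la$), the Gauss-sum exponent $\delta_1$ (needed for summability in $q$ and over residues), and the Fourier decay of $\widehat{d\si_1}$ (needed to run the continuous $L^2$ maximal theorem). All three are controlled by Birch's theorem under $\mathcal{B}(\p)>(d-1)2^d$, but coupling them cleanly in the transference step — so that the continuous $L^2$ maximal bound survives periodization, and so that the minor-arc contribution remains summable in $\la$ without sacrificing the uniform-in-$\xi$ nature of the estimate — will be the technical core of the proof.
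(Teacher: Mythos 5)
First, a point of reference: this paper does not prove Theorem B at all --- it is quoted from Magyar \cite{Mag}, and the paper only recalls the Magyar--Stein--Wainger scheme (steps i)--iv) in Section 1.2) that such proofs follow. Measured against that scheme, your proposal has the right skeleton (major/minor decomposition of the multipliers, transference of a continuous maximal bound for the dilates of $d\sigma$, summation in $q$ via Gauss-sum decay), but it has a genuine gap exactly where you flag ``the hard part'': the minor-arc term. Dominating $\sup_{\la\in\Gamma_\p}|(E_\la\widehat f)^{\vee}|$ by the full square function costs the cardinality of the $\la$-range: on a dyadic block $\la\sim\la_0$ the uniform saving $\|E_\la\|_{L^\infty}\lesssim\la^{-\delta_0}$ only yields $\la_0^{1/2-\delta_0}\|f\|_{\ell^2}$, and the $\delta_0$ furnished by Birch's method is far smaller than $1/2$, so $\sum_{\la\in\Gamma_\p}\la^{-2\delta_0}$ diverges. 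Your proposed repair (``different $\la$ concentrate their mass at different spatial scales should supply enough orthogonality'') is precisely the statement that needs proof, and it is not how the argument is closed in the literature: there the error is handled by combining (a) a restricted, dyadic-range maximal inequality for the averaging operators themselves, proved by Bourgain--Magyar methods (step iii) of the outline; \cite{Mag0} in the spherical case, and Lemma \ref{lemma2.9} is the analogous, independently nontrivial, ingredient in this paper), with (b) the major-arc maximal bounds, writing $E_\la=T_\la-M_\la$ on each block and only then summing blocks using the $\la_0^{-\delta}$ gain. Without an ingredient of type (a), the summation in $\la$ does not close, so the proposal as written does not prove the theorem.

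Two smaller inaccuracies are worth noting. In this setting the major-arc approximation carries the character $e(-\la a/q)$ and the complete sums are $F_q(a,\bfa)$ with a scalar $a\in U_q$ and a vector $\bfa\in Z_q^n$; after summing over the roughly $q$ values of $a$ (the $\bfa$ being handled by the disjoint supports of the cutoffs) the usable decay is $q^{1-c}$ with $c=\mathcal{B}(\p)(d-1)^{-1}2^{1-d}>2$, so the summability in $q$ is exactly where the hypothesis $\mathcal{B}(\p)>(d-1)2^d$ enters, not merely through ``some $\delta_1>0$.'' Also, the Fourier decay of $\widetilde{d\sigma}$ is obtained from Birch-type estimates for oscillatory integrals (cf.\ \cite{Mag}, Section 1, and the analogous Lemma \ref{lemma2.5} here), not by stationary phase on a smooth stratum: the level surface can be singular, and it is again the rank hypothesis that delivers the decay needed for the continuous maximal theorem.
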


The range $p> n/(n-2)$ can be obtained in this result for the case of positive definite quadratic forms in at least five variables. In general the expected sharp range is $p>n/(n-d)$, but for $d>2$ this seems a difficult question.

In this paper we are interested in obtaining similar types of results for a slightly different collection of discrete maximal operators.  Let $\p$ again be an integral form of degree $d>1$ in $n$ variables and define the convolution operators  \[
A_Nf(y)=\frac{1}{r(N)}\sum_{x\in[N]^n;\,\p(x)=0}f(y-x)\]
and the associated discrete maximal operators \[
A_*f(y)=\sup_{N\geq1} |A_Nf(y)|.\]
Analogously the normalization factor is defined as \[
r(N)=\sum_{x\in[N]^n;\,\p(x)=0}1\,\,.\]
We  work under a large rank condition on $\p$ which forces $r(N)\< N^{n-d}$. One can guarantee also that $r(N)\gs N^{n-d}$ by assuming that $\p(x)=0$ has a nonsingular solutions in every $p$-adic completion of $\mathbb{Q}$ (including $\mathbb{Q}_\infty=\R$).  Forms of degree $d>1$ with such nonsingular solutions and $\mathcal{B}(\p)>(d-1)2^d$ will be called $regular$ and we restrict our attention to such forms.

Our main result is the following.

\begin{thm}\label{thm11}
If $\p$  is a regular form of degree $d>1$ then  $A_*$ is bounded on $\ell^p$ if and only if $p>1$. 
\end{thm}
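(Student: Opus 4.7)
Apply $A_*$ to $f=\delta_0$: since $-y \in V(\p):=\{x\in\Z^n:\p(x)=0\}$ belongs to $[N]^n$ iff $N\geq |y|_\infty$, the regularity estimate $r(N)\asymp N^{n-d}$ gives $A_*\delta_0(y)\asymp |y|_\infty^{-(n-d)}$ whenever $-y\in V(\p)$. Using the counting bound $\#\{y\in V(\p):|y|_\infty\leq R\}\asymp R^{n-d}$ and partial summation,
\[
\|A_*\delta_0\|_{\ell^p}^p \asymp \int_1^\infty t^{(n-d)(1-p)-1}\,dt,
\]
which diverges exactly when $p\leq 1$.

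\textbf{Sufficiency: the circle-method plan.} The plan is to adapt the strategy of Magyar--Stein--Wainger and of Magyar's Theorem B. First reduce to the dyadic maximal function $\sup_{k\geq 0}|A_{2^k}f|$, and study the multiplier $\widehat{A_N}(\xi)=r(N)^{-1}\sum_{x\in V(\p)\cap[N]^n}e(x\cdot\xi)$ on $\mathbb{T}^n$. Birch's circle-method machinery, valid because $\mathcal{B}(\p)>(d-1)2^d$, furnishes a major/minor-arc decomposition $\widehat{A_N}=M_N+E_N$ with $\|E_N\|_{L^\infty(\mathbb{T}^n)}\lesssim N^{-\delta}$ for some $\delta>0$. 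The corresponding minor-arc maximal operator $\sup_k|E_{2^k}*f|$ is $\ell^2$-bounded by Plancherel and a square-function argument, and then extrapolates to every $\ell^p$ with $p>1$ via interpolation against the trivial $\ell^\infty$ bound.

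\textbf{The major-arc term.} Birch's asymptotic casts the main term as
\[
M_N(\xi)=\sum_{q\leq Q(N)}\ \sum_{\substack{a\in(\Z/q\Z)^n\\(a,q)=1}}\Phi(a,q)\,\widehat{d\sigma_N}(\xi-a/q)\,\chi_q(\xi-a/q),
\]
where $\Phi(a,q)$ is a normalized complete Weyl sum satisfying $|\Phi(a,q)|\lesssim q^{-\eta}$ for some $\eta>1$ under the Birch-rank hypothesis, $\widehat{d\sigma_N}$ is the Fourier transform on $\R^n$ of the normalized continuous measure on $\{\p=0\}\cap[-N,N]^n$, and $\chi_q$ is a smooth cutoff localizing to a $(qN)^{-1}$-neighbourhood of the origin. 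Since $\p$ is homogeneous, the measure $d\sigma_N$ is just an $N$-dilation of a single probability measure $d\sigma_1$ supported on $\{\p=0\}\cap[-1,1]^n$, so the associated continuous maximal operator $\mathcal{T}f(y):=\sup_{N\geq 1}|f*_{\R^n}d\sigma_N(y)|$ is the single-measure dilation maximal function for $d\sigma_1$. Granting that $\mathcal{T}$ is $L^p(\R^n)$-bounded for every $p>1$, Magyar's transference principle lifts each $q$-block of $M_N$ to an $\ell^p(\Z^n)$ operator of norm $\lesssim q^{-\eta}$, and summing over $q$ yields the desired estimate.

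\textbf{Main obstacle.} The crux is establishing the $L^p(\R^n)$ bound on $\mathcal{T}$ for every $p>1$. Because $d\sigma_1$ is a singular $(n-1)$-dimensional surface measure on a cone whose apex lies in its support, pointwise domination by the Hardy--Littlewood maximal operator is not immediate. I would split $d\sigma_1$ into a piece supported away from the singular locus of the projective variety $\{\p=0\}$, where the surface is smooth, $\widehat{d\sigma_1}$ enjoys transverse decay, and Bourgain-type arguments give $L^p$ boundedness for $p>1$, and a piece concentrated near the singularities, whose small support (controlled by the large Birch rank) makes its contribution to the supremum negligible. Once $\mathcal{T}$ is handled, the circle-method error bound together with the arithmetic $q^{-\eta}$ decay completes the proof in the full sharp range $p>1$.
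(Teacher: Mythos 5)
Your necessity argument is essentially the paper's (modulo the cosmetic point that $A_*\delta_0(y)\neq 0$ requires $y\in V$, not $-y\in V$), but the sufficiency sketch has two genuine gaps, and they are exactly the points the paper is built around. First, the major-arc step: you claim each $q$-block transfers to an $\ell^p(\Z^n)$ operator of norm $\lesssim q^{-\eta}$ and that one can then sum over $q$. For the surface $\p(x)=0$ the arithmetic factor is the Birch-type sum $F_q(a,\mathbf{a})$, whose only available decay is $q^{-c}$ with $c>2$, uniformly in $\mathbf{a}$ --- nothing like the Gauss-sum decay $q^{-n/2}$ that powers Magyar--Stein--Wainger. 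Transference preserves this smallness only at $\ell^2$ (Plancherel plus disjointness of the cutoffs gives a block norm $\sim q^{1-c}$), while at $\ell^1$ the best one can say about a block is $O(1)$; interpolating gives summability over $q$ only for $p$ bounded away from $1$ (for quadratic forms, nothing below $n/(n-2)$). The paper states this obstruction explicitly in its overview and avoids it by never splitting into individual arcs: because $\lambda=0$ there is no character $e(-\lambda a/q)$, so all arcs with denominator dividing $Q_j=2^j!$ are packaged into the single multiplier $\Omega_{N,Q_j}$, which factors as $W_{Q_j}(\xi)$ times a lattice sum of dilates of $\widetilde{d\sigma}$. The key new ingredient is a uniform-in-$q$ $\ell^1$ bound $\|\mathscr{F}^{-1}(W_q\widehat{f})\|_{\ell^1}=O(\|f\|_{\ell^1})$, resting on the arithmetic count $q^{1-n}\#\{s\in Z_q^n:\p(s)\equiv 0\ (q)\}=O(1)$; interpolation with the $\ell^2$ bound then gives an $\ell^p$ bound uniform in $q$ for every $p>1$, with no summation of block norms at all. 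Your proposal contains no substitute for this mechanism.

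Second, your treatment of the error term is incorrect as stated: interpolating an $\ell^2$ bound against the trivial $\ell^\infty$ bound only reaches $p\geq 2$. To get below $2$ one must interpolate the $2^{-\delta j}$-type $\ell^2$ estimates against an a priori $\ell^{p_0}$ bound, $p_0$ near $1$, for the same maximal-type objects; in the paper this is supplied by the uniform-in-$q$ estimate above together with a Bourgain-style partial maximal inequality $\|\sup_{N_0\leq N\leq N_0^2}A_N\|_{\ell^p\to\ell^p}=O(\log\log N_0)$, proved in the final section by adapting Bourgain's $L^2$ argument, and by the telescoping $\Omega_{k,j}-\Omega_{k,j-1}$ decomposition with $\ell^2$ decay in $j$. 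Neither ingredient appears in your sketch. Finally, what you single out as the main obstacle --- $L^p(\R^n)$ boundedness, $p>1$, of the dilation maximal operator for $d\sigma_1$ --- is actually the easy part: the decay $|\widetilde{d\sigma}(\xi)|\lesssim(1+|\xi|)^{-c}$ with $c>1$ (from Birch, via Magyar) combined with the Duoandikoetxea--Rubio de Francia theorem gives it directly, with no need to split off the singular locus.
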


The reader should note that the condition on the nonsingular real solution in our assumptions rules out positive polynomials, so this result is indeed disjoint from Theorem B. The method of \cite{Mag} does however extend to give a proof for the $\ell^2$ case of Theorem 1. Similarly, the methods of Magyar, Stein, and Wainger apply to give the result for indefinite quadratic forms of rank at least $5$ in the range $p>n/(n-2)$. Also note that the results of \cite{MT} cover  special cases of our main result in the full range $p>1$.

 To see the `only if' requirement in the statement of  Theorem \ref{thm11} we simply need to show by example that the result fails when $p=1$. For this we can consider precisely the same example that is used to prove that the range of $p$ is optimal in Theorem A, which is an insight attributed to A.  Ionescu.  Let $f_0$ be the function which is one at the origin and is otherwise zero. For  $f_0$ we have \[A_*f_0(y)\approx\1_{V}(y)||y||^{n-d}_{\ell^\infty}\] where $\1_V$ is the characteristic function of the set $V=\{x\in\Z^n:\p(x)=0\}$ and \[||y||_{\ell^\infty}=\sup_{i=1,...,n}|y_i|.\] For a surface with $r(N)\gs N^{n-d}$, as is the case for regular $\p$, it is easy to see that $A_*f_0$ is not in $\ell^1$.

\subsection{Overview}

A worthwhile exercise for us at this point is to identify the key steps used in the proof of Theorem A.  Several features of our approach are similar, and this helps  highlight some relevant differences  between the operators considered there and the ones treated below. The outline goes as follows. \\

i): Approximate the Fourier multipliers of the $S_\la$ with the circle method. The multipliers are given by \[\widehat{\sigma_\la}(\xi)=\frac{1}{r(\la)}\sum_{|x|^2=\la}e(x\cdot\xi).\] One gets a decomposition of the form    \[
\widehat{\sigma_\la}=m_\la+e_\la\]
where $m_\la$ takes the shape as a sum of `major arc' terms, \eq\label{1.21}
\sum_{q=1}^\infty\sum_{a\in U_q}e(-\la a/q)\,m^{a/q}_\la(\xi),\ee
and $|e_\la|\< \la^{-\delta}$ uniformly. The operators $M_\la$, $M_\la^{a/q}$ and $E_\la$ are then defined by the multipliers $m_\la$, $m_\la^{a/q}$, and $e_\la$, respectively, giving a decomposition of the spherical operator $S_\la$ as  
\[S_\la =M_\la + E_\la  =\sum_{q=1}^\infty\sum_{a\in U_q}e(-\la a/q)M_\la^{a/q}+E_\la .\] 
One in turn defines the maximal operators $M_*f=\sup_\la|M_\la f|$,  $M_*^{a/q}f=\sup_\la|M_\la^{a/q}f|$, and $E_*f=\sup_\la|E_\la f|$.

ii): The associated maximal operators $M_*^{a/q}$ satisfy the estimate \eq\label{1.22}
||\sup_{\lambda}|M^{a/q}_\la f|\,||_{\ell^p}\<q^{-n(1-1/p)+\eps}||f||_{\ell^p}\ee
for each $q$ (uniformly in $a$). This involves a reduction to the spherical maximal theorem on $\R^n$. Then an application of  the triangle inequality gives \[
|| M_*||_{\ell^p\to\ell^p}\leq\sum_{q=1}^\infty \sum_{a\in U_q} || M^{a/q}_*||_{\ell^p\to\ell^p}\< 1\] 
when $p>n/(n-2)$.

iii):  From \cite{Mag0} we have the partial maximal function inequality\[
||\sup_{\lambda_0\leq\lambda<2\lambda_0}|S_\la f|\,||_{\ell^p}\<||f||_{\ell^p}\]
for all $p>n/(n-2)$.

iv): With the uniform estimates in i), the operators $E_\la$ satisfy the estimate\[
||\sup_{\lambda_0\leq\lambda<2\lambda_0}|E_\la f|\,||_{\ell^2}\<\lambda_0^{-\delta}||f||_{\ell^2}.\]
 This is turn implies that \[
 ||\sup_{\lambda_0\leq\lambda<2\lambda_0}|E_\la f|\,||_{\ell^p}\<\lambda_0^{-\delta}||f||_{\ell^p}\]
for all $p>n/(n-2)$ by  interpolation simply by noticing  that  $E_\la=S_\la-M_\la$ and applying the estimates obtained for $S_*$ and $M_*$ in parts ii) and iii). Finally 
 \[
 ||\sup_{\lambda}|E_\la f|\,||_{\ell^p}\leq\sum_{j=0}^\infty||\sup_{2^j\leq\lambda<2^{j+1}}|E_\la f|\,||_{\ell^p}\<\sum_{j=0}^\infty2^{-\delta j}||f||_{\ell^p}\<||f||_{\ell^p}\]
when $p>n/(n-2)$.\\

Ideally we would like to follow this outline also, but we run into a problem with the estimate in \eqref{1.22}. In our situation the analogous $M^{a/q}_*$ are morally identical, and hence there is no real hope that the estimates  can be strengthened to the point where we can simply sum the individual $\ell^p\to\ell^p$ norms over $q$. This means, for example, in the case of quadratic forms this outline can never achieve inequalities for $p$ below $n/(n-2)$. 

On the positive side for us, though, is that the analogue of \eqref{1.21} doesn't contain the character $e(-\la a/q)$. This translates to the fact that we are not obliged to apply the triangle inequality when obtaining the  $\ell^p\to\ell^p$ estimate for $M_*$. In turn this opens up  the possibility of dealing directly with $||M_* f||_{\ell^p}$ without partitioning $M_*$ into its `major arc' constituents. At this point we can take inspiration from Bourgain, as the difficulties he conquers in \cite{Bou}  are similar in nature. 

The overall argument presented for  Theorem \ref{thm11} is indeed an amalgamation of the methods used in the works of Bourgain \cite{Bou}, Magyar \cite{Mag}, and Magyar, Stein, and Wainger \cite{MSW}. The model employed here shares similarities with the outline above, the main differences being that there are further modifications to the main terms obtained in step i) and  steps ii) and iv) are run more concurrently. The generality of their works allows us to modify certain aspects in a relatively straightforward manner (for example, obtaining the relevant partial maximal function inequality), and in some cases (for example, obtaining initial approximations for the multipliers)  we can borrow results directly. That being said, carrying this out is not a straightforward application of what is done previously. Here the new insight boils down to a finer analysis of exponential sums, as we require a class of sums which is more general than those used in \cite{Mag}, something which is motivated by \cite{GWpaper}.

The  paper is formatted as follows. In section 2 we formulate several auxiliary results needed in the the proof of the  main result, and the proof subject to these results is given in section 3. The remainder of the paper is comprised of sections dedicated to proving the auxiliary results formulated in  section 2,  as well as handling  a few other necessary items that we shall need. In section 4 we devote ourselves to  results concerning exponential sums, and in section 5 we reconsider  the initial approximation for the Fourier multipliers of the operators $A_N$. Sections 6 and 7 respectively contain certain results related to $\ell^p$ estimates ($p<2$) estimates and $\ell^2$ estimates. Finally, section 8 gives a proof of the  partial maximal operators estimate that is needed.

%%%%%%%%%%%%%%%%%%%%%%%%%%%%%%%%%%
\vspace{.2in}
\section{Preliminary results}

There is a somewhat lengthy list of results presented here which are organized into subsections; one  for exponential sum results, one on Fourier multiplier approximations,  one on a continuous maximal function estimate, and a further subsection on results related to the approximations. The one thing missing in this breakdown is the partial maximal function estimate which we  present now. 
 
\begin{lemma}\label{lemma2.9}
For fixed  $p>1$ we have that 
\[
|| \sup_{N_0\leq N\leq N_0^2}A_N||_{\ell^p\to\ell^p}=O(\log\log(N_0)).
\]
whenever $\p$ is a regular form. 
\end{lemma}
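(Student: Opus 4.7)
My plan is to adapt the Magyar--Stein--Wainger partial maximal function strategy to the general surface $\p(x)=0$. The starting point is the Fourier multiplier decomposition from Section~5, writing $A_N = M_N + E_N$ with $M_N$ a sum of ``major arc'' pieces and $E_N$ an error satisfying $\|E_N\|_{\ell^2\to\ell^2}\< N^{-\de}$ for some $\de>0$.

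The first step is a uniform single-dyadic-block bound
\[
\|\sup_{N\in[M,2M]}|A_N f|\|_{\ell^p}\< \|f\|_{\ell^p}
\]
valid for all $M\geq 1$ and $p>1$. For $M_N$ this follows by combining the continuous maximal function estimate (from the corresponding subsection of Section~2) with a sampling/transference argument in the spirit of Magyar--Stein--Wainger: the major arc multipliers are smooth concentrations of an $\R^n$ multiplier around the rationals, so the continuous maximal bound lifts to a discrete one. For $E_N$ the square function $\sup_{N\in[M,2M]}|E_N f|\le\bigl(\sum_N|E_N f|^2\bigr)^{1/2}$ together with the $\ell^2$ decay yields an $\ell^2$ block bound; interpolation against the trivial $\ell^\infty$ estimate of one (valid by positivity) and a weak-type bound obtained from the $\ell^1$-contractivity of each $A_N$ delivers the full range $p>1$.

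To pass from a single dyadic block to the range $[N_0,N_0^2]$, which contains on the order of $\log_2 N_0$ dyadic blocks, a naive triangle inequality would cost a factor of $(\log N_0)^{1/p}$. The improvement to $O(\log\log N_0)$ comes from a Bourgain-style chaining: organize the dyadic scales into super-blocks of exponentially increasing length so that $O(\log\log N_0)$ super-blocks suffice, and within each super-block control the maximal function by an $\ell^p$ oscillation or variation inequality. The variation estimate is where smoothness in $N$ of the main-term multipliers $m_N^{a/q}$ and the summable decay of $E_{N+1}-E_N$ enter, giving a bound independent of the super-block's length.

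The main obstacle is the single-block bound in $\ell^p$ for $p$ close to $1$. As emphasized in the Overview, the crucial structural feature---absent from the quadratic setting of Theorem~A---is that the major arc expansion for $A_N$ carries no arithmetic phase $e(-\la a/q)$, so one is not forced to apply the triangle inequality across denominators $q$. This permits a direct transference of the full main-term multiplier $\sum_q\sum_a m_N^{a/q}$ as a single continuous operator, and this direct treatment---made rigorous through the refined exponential sum estimates of Section~4---is what opens up the full range $p>1$.
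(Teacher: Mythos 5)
Your first step is not where the content of the lemma lies, and your second step — the only part that could deliver the $\log\log N_0$ bound — has no actual mechanism behind it. The single-dyadic-block estimate $\|\sup_{N\in[M,2M]}|A_Nf|\|_{\ell^p}\<\|f\|_{\ell^p}$ is trivial: since the kernels are positive and $r(2M)/r(M)\<1$ for regular $\p$, one has pointwise $\sup_{M\leq N\leq 2M}A_N|f|\< A_{2M}|f|$, and each single $A_N$ is an average of translates, hence an $\ell^p$ contraction. (This is exactly the observation opening Section 3 of the paper; your square-function treatment of $E_N$ over a block of $\sim M$ consecutive integers would in any case lose a factor $M^{1/2-\de}$, and the claimed interpolation down to $p$ near $1$ is not available since the $E_N$ decay only in $\ell^2$.) The entire difficulty is the passage across the $\sim\log N_0$ dyadic blocks in $[N_0,N_0^2]$ with only a $\log\log N_0$ loss. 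For this you invoke ``Bourgain-style chaining'' via super-blocks together with an $\ell^p$ oscillation/variation inequality whose bound is ``independent of the super-block's length,'' justified only by smoothness in $N$ of the major-arc multipliers and summability of $E_{N+1}-E_N$. No such length-independent variational estimate near $p=1$ follows from these ingredients, and none is proved in the paper; indeed, if you had it, the full maximal theorem (Theorem 1) would follow at once and Lemma \ref{lemma2.9} would be superfluous — but the paper needs Lemma \ref{lemma2.9} precisely as an input (via interpolation) to prove Theorem 1, so your route is circular in spirit. The remark about the absence of the phase $e(-\la a/q)$ is likewise beside the point here: it matters for the global decomposition in Section 3, not for the partial maximal inequality.

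The paper's actual argument is quite different and is worth internalizing. One transfers the inequality to a large finite cyclic group $G=Z_J$ with normalized counting measure, where positivity of $\omega_N$ reduces matters to $\|\sup_{k_0\leq k\leq 2k_0}|f*K_k|\|_{L^p(G)}\<\log(k_0)\|f\|_{L^p(G)}$ (note $\log k_0\approx\log\log N_0$). Stein's theorem on limits of sequences of operators lets one replace the $L^p(G)$ norm on the left by $L^1(G)$, and dualizing turns the problem into bounding $\|\sum_{k=k_0}^{2k_0}g_k*K_k\|_{\ell^u}$, $u=p/(p-1)$ an integer, for nonnegative $g_k$ with $\sum_k g_k\leq 1$. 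Bourgain's Fourier-analytic reduction then leaves a single multilinear $L^2(G)$ estimate, namely $\|[(g_{k_2}*L_{k_2})\cdots(g_{k_u}*L_{k_u})]*(L_{k_1}-L_{k_0})\|_{L^2(G)}\leq k_0^{-u}$ for the sub-sampled kernels $L_k=K_{mk}$ with $m=C_1\log k_0$ — this sub-sampling is exactly where the $\log k_0$ loss enters. That $L^2$ lemma is the only place requiring new work, and it is proved by approximating each $\widehat{\omega_{N_i}}$ by the truncated major-arc multipliers $\Omega_i$, using Lemma \ref{lemma2.2}, Lemma \ref{lemma2.5}, and Lemma \ref{lemma2.6} to control the approximation errors, and exploiting that on the relevant frequency set the difference $\widehat{L_{k_1}}-\widehat{L_{k_0}}$ is small. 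You would need to either reproduce this duality/multilinear scheme or supply a genuine proof of a length-independent variational inequality; as written, the proposal does not prove the lemma.
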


\bigskip
%%%%%%%%%%%%%%%%%%%%%%%%%
\subsection{Exponential sums}

The exponential sums we are working with are defined as \[
F(a,q,\bfa,\bfq)=Q^{-n}\sum_{s\in Z^n_Q}e(\p(s)a/q+s\cdot\bfa/\bfq),\]
where $\bfq=(q_1,...,q_n)$, $\bfa/\bfq=(a_1/q_1,...,a_n/q_n)$, and $Q=lcm(q,q_1,...,q_n)$ is the least common multiple. 
For a given positive integer $q$ the notation $Z_q$ is used for the cyclic group $\Z/q\Z$, and $U_q$ denotes the multiplicative group $Z^*_q$. We set $U_1=Z_1$ the group consisting of the single element $0$.  When necessary, the group $Z_q$ ($U_q$) should be identified as (in) the set $\{0,1,...,q-1\}\subset\Z.$

A important observation is that in several cases there is sufficient cancellation to give $F(a,q,\bfa,\bfq)=0$. 

\begin{lemma}\label{lemma2.1}
Let $q\geq 1$ be a given integer. If $ q_i\not|\,q$ for some $1\leq i\leq n$ then for any fixed $a\in U_q$ and $a_i\in U_{q_i}$, $i=1,...,n$, we have
 \[F(a,q,\bfa,\bfq)=0.\]
\end{lemma}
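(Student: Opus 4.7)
The plan is to exhibit a one-line symmetry of the sum that forces self-cancellation. Fix an index $i$ with $q_i \nmid q$ and consider the reindexing $s \mapsto s + q e_i$, where $e_i$ is the $i$th standard basis vector. First I would verify that the summand in the definition of $F(a,q,\bfa,\bfq)$ is genuinely a function on $Z_Q^n$: since $q_j \mid Q$ for every $j$, the linear piece $s \cdot \bfa/\bfq$ is well-defined modulo $1$ as a function of $s \bmod Q$, and since $q \mid Q$ together with the fact that $\p$ has integer coefficients, $\p(s + Q e_j) - \p(s) \in q\Z$, so $e(\p(s)a/q)$ is likewise $Q$-periodic in each variable. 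In particular the sum is unchanged by the translation $s_i \mapsto s_i + q$.

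Next, I would track how the summand itself transforms under this translation. The linear term picks up the multiplicative factor $e(qa_i/q_i)$, since $(s+qe_i) \cdot \bfa/\bfq = s \cdot \bfa/\bfq + q a_i/q_i$. The polynomial term is invariant: expanding $\p(s + q e_i)$ in powers of $q$ and using $\p \in \Z[x_1,\ldots,x_n]$ shows that $\p(s + q e_i) - \p(s) \in q \Z$ for every $s \in \Z^n$, so $e(\p(s+qe_i)a/q) = e(\p(s)a/q)$. Combining these observations yields the self-reproducing identity
\[F(a,q,\bfa,\bfq) = e(q a_i/q_i)\, F(a,q,\bfa,\bfq).\]

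To conclude I would invoke the hypothesis. Because $a_i \in U_{q_i}$ we have $\gcd(a_i,q_i) = 1$, and so $q a_i/q_i \in \Z$ if and only if $q_i \mid q$. Under the standing assumption $q_i \nmid q$ the factor $e(q a_i/q_i)$ differs from $1$, and therefore $F(a,q,\bfa,\bfq) = 0$. The whole argument is a single change of variables, so I anticipate no genuine obstacle; the one point meriting care is the integrality of $\bigl(\p(s+qe_i)-\p(s)\bigr)/q$, which is automatic from $\p$ having integer coefficients.
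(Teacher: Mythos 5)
Your argument is correct: the summand is indeed $Q$-periodic in each variable, the shift $s\mapsto s+qe_i$ leaves the polynomial phase unchanged modulo $1$ because $\p(s+qe_i)-\p(s)\in q\Z$, and the resulting identity $F(a,q,\bfa,\bfq)=e(qa_i/q_i)F(a,q,\bfa,\bfq)$ with $e(qa_i/q_i)\neq 1$ (using $\gcd(a_i,q_i)=1$ and $q_i\nmid q$) forces the sum to vanish. The cancellation mechanism is the same one the paper exploits, but your packaging is different and leaner: the paper fixes $i=1$, isolates the $s_1$-sum, reduces it to a sum over $Z_{Q_1}$ with $Q_1=\mathrm{lcm}(q,q_1)$, writes $s_1=r+qt$ with $r\in Z_q$, $t\in Z_{p_1}$ (after factoring $q=pd$, $q_1=p_1d$ with $d=(q,q_1)$), and then evaluates the inner geometric sum $\sum_{t\in Z_{p_1}}e(pta_1/p_1)=0$ since $(a_1p,p_1)=1$ and $p_1>1$. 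Your global translation symmetry avoids the lcm bookkeeping and the coprime factorization entirely, at the small price of having to note explicitly (as you do) that the phase is well defined on $Z_Q^n$; the paper's version, in exchange, makes the vanishing visible as a concrete character sum, which is closer in spirit to how such sums are manipulated elsewhere in the paper (e.g.\ Lemma \ref{lemma2.3}). Either proof is complete; yours is arguably the cleaner of the two.
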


The exponential sums  that appear in \cite{Mag} present themselves as a special case of the the $F(a,q,\bfa,\bfq)$. This is the case when $q=q_1=...=q_n$, and hence $Q=q$. In this situation we use a slightly different notation for convenience:\[
F_q(a,\bfa)=q^{-n}\sum_{s\in Z_q^n}e(\p(s)a/q +s\cdot \bfa/q).\]

From \cite{Bi} we inherit the following estimate.
\begin{lemma}\label{lemma2.2}
For $q\geq1$ we have  
\[
F_q(a,\bfa)=O(q^{-c}).
\]
for all $a\in U_q$. Here $c=\mathcal{B}(\p)(d-1)^{-1}2^{1-d}$. This estimate is uniform for  $\bfa\in Z_q^n$.
\end{lemma}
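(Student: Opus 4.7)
The approach adapts Birch's Weyl-differencing treatment of the singular series to the complete exponential sum $F_q(a,\bfa)$. A preparatory step reduces to prime moduli: by the Chinese Remainder Theorem the sum is multiplicative in $q$, and for prime powers $q=p^k$ with $k\ge 2$ a $p$-adic / Hensel lifting argument shows the sum concentrates on residues lying on the singular variety of $\p \bmod p$ and produces an even stronger bound. Thus the core case is $q=p$ prime, which we may take large enough that $p\nmid a\cdot d!$.

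For $q=p$ set $S=p^n F_p(a,\bfa)=\sum_{s\in Z_p^n} e((a\p(s)+s\cdot\bfa)/p)$ and iterate the Cauchy--Schwarz squaring identity $|T|^2 = \sum_{h}\sum_{s} e(\Delta_h F(s)/p)$ a total of $d-1$ times. Since $\p$ has degree $d$, this reduces the polynomial phase to one that is linear in $s$, yielding a bound of the form
\[
|S|^{2^{d-1}} \le p^{\alpha} \sum_{h_1,\ldots,h_{d-1}\in Z_p^n}\left|\sum_{s\in Z_p^n}e\!\left(a\cdot d!\,\Phi(h_1,\ldots,h_{d-1},s)/p\right)\right|
\]
for an explicit exponent $\alpha$, where $\Phi$ is the symmetric $d$-linear form polarizing $\p$, i.e.\ $\Phi(s,\ldots,s)=\p(s)$. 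The crucial feature is that the linear perturbation $s\cdot\bfa$ is annihilated after a single differencing, which is the source of the uniformity in $\bfa$. For each fixed tuple the inner sum over $s$ equals $p^n$ when the linear form $\Phi(h_1,\ldots,h_{d-1},\cdot)$ vanishes identically modulo $p$, and zero otherwise.

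The estimate has thus been recast as a geometric one: bound the number $N_p$ of tuples $(h_1,\ldots,h_{d-1})\in Z_p^{n(d-1)}$ lying on the affine variety $W$ cut out by the $n$ equations $\partial_{s_j}\Phi(h_1,\ldots,h_{d-1},s)=0$. Specializing to the diagonal $h_1=\cdots=h_{d-1}=h$ recovers the singular variety $V^*=\{\nabla\p=0\}$, which by hypothesis has codimension $\mathcal{B}(\p)$, and Birch's dimension-theoretic argument propagates this codimension to $W$, giving $N_p \ll p^{n(d-1)-\mathcal{B}(\p)/(d-1)}$. Collecting all factors then produces $|F_p(a,\bfa)|\ll p^{-\mathcal{B}(\p)/((d-1)2^{d-1})}=p^{-c}$ uniformly in $a\in U_p$ and $\bfa\in Z_p^n$, as required.

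The main obstacle is precisely the dimension bound for $W$: the small diagonal identifies only the symmetric part of $W$ with $V^*$, and ruling out higher-dimensional off-diagonal components requires the delicate rank-reduction argument that forms the technical heart of Birch's paper. Everything else in the proof is a routine Weyl-differencing bookkeeping combined with the standard CRT/Hensel reductions.
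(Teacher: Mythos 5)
The paper does not actually prove this lemma: it is inherited verbatim from Birch, whose argument performs the Weyl differencing on the \emph{incomplete} sums over integer boxes of side $P$ with $\alpha=a/q$ and $P=q$, so that every modulus $q$, prime or composite, is handled in one stroke; the geometric input is that the complex affine variety $W\subset\mathbb{A}^{(d-1)n}$ cut out by the $n$ multilinear forms $\Phi_j(h_1,\ldots,h_{d-1})$ has codimension at least $\mathcal{B}(\p)$ (intersect $W$ with the diagonal $h_1=\cdots=h_{d-1}$, which recovers $V^*=\{\nabla\p=0\}$, and apply the projective dimension theorem), and the linear twist $s\cdot\bfa/\bfq$ is killed by the first differencing exactly as you say. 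Your proposal deviates from this by first reducing to prime-power moduli, and that is where the genuine gap sits. The case $q=p^k$, $k\ge 2$, is not a routine Hensel-lifting remark: writing $s=s_0+p^{k-1}t$ and summing over $t$ concentrates the sum on residues with $a\nabla\p(s_0)\equiv-\bfa \pmod p$, not on the singular variety mod $p$ (unless $p\mid\bfa$), and one must then iterate this descent and count solutions of such gradient congruences uniformly in $k$ and in $\bfa$ to get $|F_{p^k}(a,\bfa)|\ll p^{-ck}$. Moreover the finitely many ``bad'' primes, where the reduction of $\p$ is more singular than over $\C$, cannot be absorbed into the implied constant, because \emph{all} powers of such a prime occur as moduli and the bound must still decay in $k$. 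This uniform prime-power estimate is comparable in difficulty to the lemma itself and is simply asserted in your sketch; Birch's route avoids it entirely, which is one reason his argument takes the form it does.

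A second, related issue is the bookkeeping of the factor $d-1$. Birch's dimension lemma gives $\operatorname{codim} W\ge\mathcal{B}(\p)$, hence $N_p\ll p^{(d-1)n-\mathcal{B}(\p)}$ for all but finitely many $p$, not $p^{(d-1)n-\mathcal{B}(\p)/(d-1)}$; there is no mechanism in the diagonal/codimension argument that produces a division by $d-1$. In the actual proof the $1/(d-1)$ in $c=\mathcal{B}(\p)(d-1)^{-1}2^{1-d}$ arises from the restriction $\theta<1/(d-1)$ needed to rule out the alternative of a good rational approximation $a'/q'$ with $q'\ll q^{(d-1)\theta}<q$ in the incomplete-sum dichotomy, i.e.\ precisely from the general-$q$ analysis that your prime-only computation bypasses (for prime $q$ the complete-sum Weyl differencing in fact yields the stronger exponent $\mathcal{B}(\p)2^{1-d}$). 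So your conclusion at prime moduli is correct, but the stated justification for the point count does not give the exponent you wrote, and the case that actually forces the exponent $c$ --- general moduli, equivalently the prime powers --- is missing from the argument.
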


The $O$ notation is used in the normal way, and we also use the alternate notation $f\<g$ frequently to replace $f=O(g)$. The implied constants throughout are allowed to depend on any parameter which is not $q$ or one associated directly to $N$ (such as our later use of $k$).

One final thing that we wish to notice is a simple but useful observation based on the identity \[
\sum_{a\in Z_q}g(a/q) =\sum_{d|q}\sum_{a\in U_d}g(a/d),\]
where the sum in $d$ is overall all divisors of $q$. 
\begin{lemma}\label{lemma2.3}
For $q\geq 1$ given we have
\[
\sum_{q_1|q}...\sum_{q_n|q}\sum_{a_1\in U_{q_1}}...\sum_{a_n\in U_{q_n}}F(a,q,\bfa,\bfq)g(\bfa/\bfq)=\sum_{\bfa\in Z_q^n}F_q(a,\bfa)g(\bfa/q)
\]
for any function $g$ which is defined on the set $\{(a_1/q,...a_n/q):0\leq a_1,...,a_n\leq q-1\}$.
\end{lemma}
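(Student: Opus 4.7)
The plan is to reduce the left-hand side to the right by first eliminating the outer terms that vanish and then performing a coordinatewise reindexing of the remaining sum.

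First I would apply Lemma \ref{lemma2.1} to discard every tuple $(q_1,\ldots,q_n)$ with some $q_i \nmid q$, since $F(a,q,\bfa,\bfq) = 0$ in that case. What survives is a sum restricted to $n$-tuples with every $q_i$ a divisor of $q$. In this regime $Q = \mathrm{lcm}(q,q_1,\ldots,q_n) = q$, so the definition of $F$ collapses and, on setting $\tilde a_i := a_i q/q_i$, one has $\tilde a_i/q = a_i/q_i$, whence
\[
F(a,q,\bfa,\bfq) = F_q(a,\tilde{\bfa}) \qquad \text{and} \qquad g(\bfa/\bfq) = g(\tilde{\bfa}/q).
\]

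Next I would invoke the coordinatewise version of the identity $\sum_{a\in Z_q} g(a/q) = \sum_{d|q}\sum_{a\in U_d} g(a/d)$ cited immediately above the statement. Read in reverse, this identity asserts that the pairs $(q_i,a_i)$ with $q_i \mid q$ and $a_i \in U_{q_i}$ are in bijection with residues $\tilde a_i \in Z_q$ via the lowest-terms representation $\tilde a_i/q = a_i/q_i$. Applying this bijection independently in each of the $n$ coordinates converts the multi-sum
\[
\sum_{q_1|q}\cdots\sum_{q_n|q}\sum_{a_1\in U_{q_1}}\cdots\sum_{a_n\in U_{q_n}}
\]
into the single sum $\sum_{\tilde{\bfa} \in Z_q^n}$, which combined with the rewriting of $F$ and $g$ above yields the right-hand side.

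There is no genuine obstacle here; the statement is essentially a bookkeeping identity. The only point requiring care is that the substitution $\tilde a_i = a_i q/q_i$ must be tracked consistently in both the factor $F$ and the argument of $g$, and that the coordinatewise bijection is applied simultaneously in all $n$ slots so that the divisor conditions $q_i \mid q$ and the coprimality conditions $a_i \in U_{q_i}$ unpack exactly into the unrestricted sum $\tilde\bfa \in Z_q^n$.
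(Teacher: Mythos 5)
Your argument is correct and is essentially the paper's own proof run in the opposite direction: both rest on the lowest-terms bijection $(q_i,a_i)\mapsto \tilde a_i=a_iq/q_i$ between $\{(d,a):d\mid q,\ a\in U_d\}$ and $Z_q$ (i.e.\ the identity quoted before the lemma, applied coordinatewise) together with the observation that $\mathrm{lcm}(q,q_1,\dots,q_n)=q$, the paper merely expanding $F_q$ as an exponential sum and regrouping, where you collapse $F(a,q,\bfa,\bfq)=F_q(a,\tilde\bfa)$ directly. The only superfluous step is your appeal to Lemma \ref{lemma2.1}: the left-hand sum in the statement is already restricted to $q_i\mid q$, so there is nothing to discard.
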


\bigskip
%%%%%%%%%%%%%%%%%%%%
\subsection{Approximations I}
Here we consider some initial approximations for the associated Fourier multipliers. The multipliers are given by the normalized discrete Fourier transforms \[
\what(\xi) =\frac{1}{r(N)}\sum_{x\in[N]^n;\,\p(x)=0}e(x\cdot \xi),\]
 where $e(z)=e^{2\pi i z}$ and the notation $\widehat{f}$ denotes the Fourier transform for functions on $\Z^n$:\[
\widehat{f}(\xi)= \sum_{x\in\Z^n}f(x)e(x\cdot \xi).\]
The notation $[N]$ is shorthand for $\{-N,-N+1,..., N\}$. We ultimately  borrow an initial approximation for the functions $\what$ from \cite{Mag}, although one should note that in this work $\what$ is viewed as a function on the $n$-torus $\Pi^n=(\R/\Z)^n$, where the torus $\Pi$ is identified with the real interval $[-1/2,1/2]$ (with endpoints identified) and is equipped with the Lebesgue measure. 

The next result is essentially  (\cite{Mag}, Lemma 1), the proof being identical. 
 \begin{lemma}\label{lemma2.4}
Let $\p$ be a regular form. There exists a constant $\kappa>0$ and a  $\delta>0$ such that 
\[\what(\xi)=\kappa N^{d-n}\sum_{q=1}^\infty\sum_{a\in U_q}\sum_{\bfa\in Z_q^n} F_q(a,\bfa)\zeta(q(\xi-\bfa/q))\ds_N(\xi-\bfa/q)+O(N^{-\delta}).\]
\end{lemma}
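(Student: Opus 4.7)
The plan is to follow the Hardy--Littlewood circle method exactly as executed in [Mag, Lemma 1]. First I would extract the constraint $\p(x)=0$ via the orthogonality relation $\1_{\p(x)=0}=\int_0^1 e(\p(x)\alpha)\,d\alpha$, which gives
\[
r(N)\,\what(\xi)=\int_0^1 \sum_{x\in[N]^n} e(\p(x)\alpha + x\cdot\xi)\,d\alpha.
\]
Then I would partition $[0,1]$ into major arcs $\mathfrak{M}(a,q)$ centered at reduced fractions $a/q$ with $q \leq N^{\theta}$ for a small $\theta>0$, and the complementary minor arcs. On the minor arcs, Birch's Weyl-type estimate, whose hypothesis is precisely $\mathcal{B}(\p)>(d-1)2^{d}$, yields a saving of a positive power of $N$ on the inner sum. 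After dividing by the regularity lower bound $r(N)\gtrsim N^{n-d}$, this contribution becomes $O(N^{-\delta})$.

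On a major arc with $\alpha=a/q+\beta$, I would decompose $x=qs+t$ with $t\in Z_q^n$, using that $\p(qs+t)\equiv \p(t)\pmod{q}$. This splits the inner sum into a complete exponential sum over residue classes times a smooth sum in $s$ which, for $\xi$ near a rational point $\bfa/q$, is a Riemann sum approximating an oscillatory integral over a continuous piece of the variety; this integral (together with a standard cutoff localizing $|\xi-\bfa/q|\lesssim 1/q$) is precisely what is bundled into $\zeta(q(\xi-\bfa/q))\ds_N(\xi-\bfa/q)$. Assembling the complete sums from all the rational approximations produces the main term as written, with the coefficient $\kappa$ coming from the normalization of the continuous archimedean density.

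The delicate bookkeeping step, which is also where the preceding lemmas are needed, is the reconciliation of the denominator of the approximation to $\alpha$ with the componentwise denominators appearing in the approximation to $\xi$. Approximating $\xi$ naively gives the more general sums $F(a,q,\bfa,\bfq)$ with separate denominators $q_i$; Lemma \ref{lemma2.1} eliminates every term for which some $q_i\nmid q$, and Lemma \ref{lemma2.3} then rewrites the remaining sum in the symmetric form $\sum_{\bfa\in Z_q^n} F_q(a,\bfa)$ displayed in the statement. This is the one place where our setting is not literally identical to [Mag], since we are tracking a vector frequency $\xi$ rather than a scalar parameter $\lambda$; fortunately, the cancellation in Lemma \ref{lemma2.1} is exactly what forces the shared-denominator structure, so once that is invoked the remainder of the argument transcribes verbatim from [Mag, Lemma 1].
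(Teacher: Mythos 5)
Your first two paragraphs are exactly the argument the paper has in mind: the paper offers no proof of Lemma \ref{lemma2.4} beyond the remark that it is essentially (\cite{Mag}, Lemma 1) ``the proof being identical,'' and your outline --- orthogonality in $\alpha$, Birch's Weyl-type bound on the minor arcs (where $\mathcal{B}(\p)>(d-1)2^d$ enters), the splitting $x=qs+t$ on a major arc at $a/q$, the Riemann-sum passage to the oscillatory integral $\ds_N$, and division by $r(N)\gs N^{n-d}$ --- is a faithful transcription of Magyar's proof. The only genuine differences from \cite{Mag} are the absence of the character $e(-\la a/q)$ (the level here is $0$) and the box truncation $\phi(x/N)$, which is precisely why the proof transfers verbatim.

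Your final paragraph, however, misidentifies the bookkeeping. No independent rational approximation of $\xi$ with componentwise denominators $q_1,\dots,q_n$ occurs in this proof, so the generalized sums $F(a,q,\bfa,\bfq)$ never arise here and Lemmas \ref{lemma2.1} and \ref{lemma2.3} play no role in Lemma \ref{lemma2.4}; in the paper they are used only later, in the manipulation of $\Omega_{N,Q_j}$ in subsection 2.4 (and in the proof of Lemma \ref{lemma2.8}). The mechanism in the major-arc analysis is simpler: once $\alpha$ lies in the arc at $a/q$ and one writes $x=qs+t$, the frequency is reduced modulo $\frac1q\Z^n$ with the \emph{same} $q$, i.e.\ $\xi=\bfa/q+\eta$ with $\bfa\in Z_q^n$, and since $e(qs\cdot\bfa/q)=1$ the complete sum $F_q(a,\bfa)$ factors out directly; when $q\xi$ is not close to an integer vector the contribution is controlled by estimates for the incomplete sum in $s$ together with the decay of $\ds$, not by the exact vanishing of a complete sum as in Lemma \ref{lemma2.1}. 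Relatedly, your claim that the setting differs from \cite{Mag} because one tracks ``a vector frequency $\xi$ rather than a scalar parameter $\lambda$'' is not accurate: Magyar's Lemma 1 is likewise a statement about the multiplier as a function of $\xi\in\Pi^n$. A minor further point: $\kappa$ is the reciprocal of the constant in $r(N)\sim c_0N^{n-d}$, so it encodes the singular series as well as the archimedean density, not the latter alone.
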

Here (and always)  $\delta$ represents a small positive number, not necessarily the same at each occurrence, and $\zeta$ is a fixed smooth bump function equal to one on $[-1/10,1/10]^n$ and supported on $[-1/5,1/5]^n$. The term $\dsN$ represents the Fourier transform of a measure supported on the surface given by $\p(x)=0$, something  which is discussed in more detail the next subsection.

 Denote the  dyadic interval of integers $[2^l,2^{l+1})$ by $I_l$  for $l\geq0$. For a fixed $l$ we  define 
\[
M_{N,l}(\xi)=\kappa N^{d-n}\sum_{q\in I_l}\sum_{a\in U_q}\sum_{\bfa\in Z_q^n} F_q(a,\bfa)\zeta(10^l(\xi-\bfa/q))\dsN(\xi-\bfa/q).\]
 We now let $M$ denote a multiplier as opposed to the associated operator in contrast to the discussion in section 1.2.

\begin{lemma}\label{lemma2.6}
If $\p$ is a regular form then there is a $\delta>0$ such that 
\[\what(\xi)=\sum_{l=0}^\infty M_{N,l}(\xi)+O(N^{-\delta}).\]
uniformly in $\xi$.\end{lemma}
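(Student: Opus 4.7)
The plan is to derive Lemma~\ref{lemma2.6} from Lemma~\ref{lemma2.4} by regrouping the $q$-sum dyadically: $\sum_{q\geq 1}=\sum_{l\geq 0}\sum_{q\in I_l}$. With this reorganization the main term in Lemma~\ref{lemma2.4} coincides with $\sum_l M_{N,l}(\xi)$ except that the cutoff $\zeta(q(\xi-\bfa/q))$ appears in place of $\zeta(10^l(\xi-\bfa/q))$ whenever $q\in I_l$. For $l=0$ only $q=1$ contributes and the two cutoffs agree, while for $l\geq 1$ the crude inequality $10^l>2^{l+1}>q$ shows $\zeta(10^l\,\cdot)$ has strictly smaller support than $\zeta(q\,\cdot)$. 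It therefore suffices to bound the error
\[E(\xi)=\kappa N^{d-n}\sum_{l\geq 1}\sum_{q\in I_l}\sum_{a\in U_q}\sum_{\bfa\in Z_q^n}F_q(a,\bfa)\bigl[\zeta(q\eta)-\zeta(10^l\eta)\bigr]\dsN(\eta),\qquad \eta=\xi-\bfa/q,\]
by $O(N^{-\delta})$ uniformly in $\xi$.

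First I would observe that, for each $l\geq 1$ and $q\in I_l$, the supports of $\zeta(q(\xi-\bfa/q))$ over distinct $\bfa\in Z_q^n$ are disjoint (radius $1/(5q)$ versus spacing $1/q$), so for each fixed $\xi$ only one $\bfa=\bfa_q^\ast(\xi)$ contributes per $q$. Moreover the bracketed difference vanishes for $|\eta|_\infty\leq 1/(10\cdot 10^l)$, where both cutoffs equal $1$. Combined with $|F_q(a,\bfa)|\<q^{-c}$ from Lemma~\ref{lemma2.2} and $|U_q|\leq q$, this yields
\[|E(\xi)|\<N^{d-n}\sum_{l\geq 1}\sum_{q\in I_l}q^{1-c}\,|\dsN(\eta_q^\ast)|\,\1\bigl[|\eta_q^\ast|_\infty\geq 1/(10\cdot 10^l)\bigr],\]
where the exponent $c=\mathcal{B}(\p)(d-1)^{-1}2^{1-d}$ exceeds $2$ by the regularity hypothesis.

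The remaining ingredient is pointwise decay of $\dsN$, which will be recalled from its definition in the next subsection; the form I expect to use is $|\dsN(\eta)|\<N^{n-d}\min\{1,(N|\eta|)^{-\gamma}\}$ for some fixed $\gamma>0$. Splitting at $L=\lfloor\log_{10}N\rfloor$, for $1\leq l\leq L$ the indicator forces $N|\eta|\gs 1$ and the level-$l$ block is $\<(2^{2-c}10^\gamma)^l N^{-\gamma}$; when $2^{2-c}10^\gamma>1$ the summation is dominated by $l=L$ and the identity $(10^\gamma)^L=N^\gamma$ cancels the $N^{-\gamma}$ factor to give $\<N^{(2-c)\log_{10}2}$ (the other case is even smaller). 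For $l>L$ I would drop the indicator and use $|\dsN|\leq CN^{n-d}$, making the level-$l$ block $\<2^{l(2-c)}$; the geometric tail then sums to $\<2^{L(2-c)}=N^{(2-c)\log_{10}2}$. Both regimes thus yield $O(N^{-\delta})$ with $\delta=(c-2)\log_{10}2>0$.

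The main obstacle is that everything hinges on having $c>2$, rather than the weaker $c>1$ that sufficed for the absolute convergence in Lemma~\ref{lemma2.4}; with only $c>1$ the dyadic reorganization above diverges, so the regularity hypothesis $\mathcal{B}(\p)>(d-1)2^d$ is used here in an essential way. A secondary but genuine technical point is establishing the pointwise bound on $\dsN$ with an exponent $\gamma$ independent of $N$, which is precisely what the next subsection needs to supply before this sketch can be turned into a rigorous proof.
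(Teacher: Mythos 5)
Your proposal is correct and follows essentially the same route as the paper: regroup the $q$-sum from Lemma \ref{lemma2.4} dyadically, note that for each $\xi$ and $q$ only one $\bfa$ contributes and the cutoff difference is supported where $|\xi-\bfa/q|\gtrsim 10^{-l}$, then combine the decay of $\ds$ (Lemma \ref{lemma2.5}) for small $l$ with the trivial bound and the $c>2$ summability of $q^{1-c}$ (Lemma \ref{lemma2.2}) for large $l$. The only difference is bookkeeping: the paper splits at $l\approx\delta\log N$ and uses a uniform $N^{-\delta}$ bound on $\ds$ there, while you split at $l=\lfloor\log_{10}N\rfloor$ and track the geometric factor explicitly, which yields the same $O(N^{-\delta})$ conclusion.
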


The form of the approximation in Lemma \ref{lemma2.4} requires modifications. This lemma, the first of the changes, follows from applications of  Lemma \ref{lemma2.2} and Lemma \ref{lemma2.5} (which is a Fourier decay estimate for $\ds$ stated below in section 2.3).

We also have the following maximal function estimate for the $M_{N,l}$. The particular phrasing in this result  is useful later on.
\begin{lemma}\label{lemma2.115}
Let $j\geq1$. Then
\[
||\sup_{N=2^k;\,k\geq 4^{j-1}}  |\mathscr{F}^{-1}(M_{N,j}\widehat{f})|\,||_{\ell^2}\< 2^{-\delta j}||f||_{\ell^2}\]
for regular forms $\p$.
\end{lemma}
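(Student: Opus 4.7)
The plan is to apply a Magyar--Stein--Wainger style transference to reduce the maximal operator to a continuous analogue on $\R^n$ and then to extract the gain $2^{-\delta j}$ from the exponential-sum bound of Lemma \ref{lemma2.2}. For $j\geq 1$ the bumps $\zeta(10^j(\cdot-\bfa/q))$ indexed by $q\in I_j=[2^j,2^{j+1})$ are pairwise disjointly supported, since distinct rationals $\bfa/q\neq\bfa'/q'$ differ in some coordinate by at least $1/(qq')\geq 2^{-2j-2}$, which exceeds $2/(5\cdot 10^j)$ precisely when $2.5^j>8/5$. Let $\Phi_N(\xi):=\kappa N^{d-n}\zeta(10^j\xi)\dsN(\xi)$ denote the continuous multiplier on $\R^n$, supported in $|\xi|_\infty\leq 1/(5\cdot 10^j)$, and let $S_N$ denote its continuous convolution operator. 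The change of variable $\xi\mapsto\xi+\bfa/q$ on each bump together with the modulation identity yields
\[
\mathscr{F}^{-1}(M_{N,j}\widehat f)(x) \;=\; \sum_{q\in I_j}\sum_{a\in U_q}\sum_{\bfa\in Z_q^n} F_q(a,\bfa)\,e(x\cdot\bfa/q)\,(S_N g_{\bfa/q})(x),
\]
where $g_{\bfa/q}(y):=e(-y\cdot\bfa/q)f(y)$, understood via the standard sampling extension from $\Z^n$ to $\R^n$ (legitimate because the relevant Fourier support lies within the small bump and, under the hypothesis $k\geq 4^{j-1}$, the periodization error from $\dsN$ is absorbed by Lemma \ref{lemma2.5}).

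Next, I would invoke two uniform bounds: $\|\Phi_N\|_\infty\lesssim 1$ from Lemma \ref{lemma2.5}, and the $L^2(\R^n)$-boundedness of the continuous maximal operator $\sup_N|S_N g|$, which is essentially the dilation maximal operator for the smooth piece of $\{\p=0\}$ at a nonsingular real zero, bounded on $L^2$ by a Stein-type spherical maximal theorem via nonvanishing Gaussian curvature (available under the regularity hypothesis on $\p$). Combined with disjointness of supports and Lemma \ref{lemma2.2} (giving $\sup_{q\in I_j,a,\bfa}|F_q(a,\bfa)|\lesssim 2^{-jc}$ with $c=\mathcal{B}(\p)(d-1)^{-1}2^{1-d}>0$), a vector-valued transference in the spirit of \cite{MSW} yields
\[
\left\|\sup_N|\mathscr{F}^{-1}(M_{N,j}\widehat f)|\right\|_{\ell^2(\Z^n)}\;\lesssim\; \Big(\sup_{q,a,\bfa}|F_q(a,\bfa)|\Big)\cdot\left\|\sup_N|S_N|\right\|_{L^2\to L^2}\cdot\|f\|_{\ell^2}\;\lesssim\; 2^{-jc}\|f\|_{\ell^2},
\]
proving the lemma with $\delta=c$.

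The hardest step is the vector-valued transference in the maximal function setting: a naive triangle-inequality-plus-Cauchy--Schwarz scheme would introduce entropy factors of order $2^{j(n+2)/2}$ that swamp the decay $2^{-jc}$, so one must couple the supremum over $N$ to the continuous maximal function without spreading the gain across the index $(q,a,\bfa)$. The hypothesis $k\geq 4^{j-1}$ is decisive here: it forces $N$ to be superexponentially large in $j$, so that the discrete multiplier $M_{N,j}$ is faithfully approximated by the continuous symbol $\Phi_N$ on each bump with residual error below any prescribed polynomial threshold in $2^{-j}$, preserving the full $2^{-jc}$ gain throughout the transference.
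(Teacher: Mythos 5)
The crux of your argument --- the ``vector-valued transference'' that is supposed to yield the bound $\bigl(\sup_{q,a,\bfa}|F_q(a,\bfa)|\bigr)\cdot\|\sup_N|S_N|\|_{L^2\to L^2}\cdot\|f\|_{\ell^2}$ with no factor counting the indices $(q,a)$ --- is precisely the step you do not prove, and it is not believable as stated. After your modulation decomposition, the coefficient attached to the single bump at $\bfa/q$ is $\sum_{a\in U_q}F_q(a,\bfa)$, for which the only available bound from Lemma \ref{lemma2.2} is $q\cdot q^{-c}=q^{1-c}$; there is no square-root or better cancellation in the $a$-sum to appeal to, and on top of that there are $\sim 2^j$ values of $q\in I_j$, whose rationals $\bfa/q$ can even coincide (non-reduced fractions), so some summation loss over $(q,a)$ is unavoidable in this scheme. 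Consequently your conclusion $\delta=c$ is stronger than what the method gives, and your diagnosis of the loss structure is off: the dangerous entropy is not $q^n\approx 2^{jn}$ from the $\bfa$-sum (that sum costs nothing, by disjointness of the bumps and Plancherel), but the $\approx 2^{2j}$ terms from $q\in I_j$ and $a\in U_q$. The paper's proof embraces exactly this loss: apply the triangle inequality over $q\in I_j$ and $a\in U_q$, for each fixed $(q,a)$ use the factorization of the multiplier into an $N$-independent discrete piece and a periodized continuous piece, remove the supremum in $N$ by (\cite{MSW}, Corollary 2.1) together with Lemma \ref{lemma2.7} (uniformly in $q$), and then bound the remaining piece by $\sup_{\bfa}|F_q(a,\bfa)|\,\|f\|_{\ell^2}\lesssim q^{-c}\|f\|_{\ell^2}$ via Plancherel and disjoint supports. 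The sum $\sum_{q\in I_j}\sum_{a\in U_q}q^{-c}=\sum_{q\in I_j}q^{1-c}=O(2^{-\delta j})$ precisely because regularity forces $c>2$; the gain is $\delta\approx c-2$, not $c$. So the ingredient you were missing is not a loss-free transference but the observation that $c>2$ beats the $2^{2j}$-fold triangle inequality.

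Two secondary points. First, the restriction $k\geq 4^{j-1}$ plays no role in this lemma (the paper's argument bounds the supremum over all dyadic $N$); it is needed for the error terms $E^{(1)}_{N,j}$, $E^{(2)}_{N,j}$ in Lemmas \ref{lemma2.10} and \ref{lemma2.11}, so your claim that it is ``decisive'' here signals a misreading of where the approximation errors enter. Second, the $L^2(\R^n)$ bound for the continuous maximal operator should be quoted from Lemma \ref{lemma2.7}, which rests on the Fourier decay of $\ds$ (Lemma \ref{lemma2.5}) and the Duoandikoetxea--Rubio de Francia theorem; a ``nonvanishing Gaussian curvature'' argument is not available, since regularity of $\p$ does not give curvature of the hypersurface $\{\p=0\}$.
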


\bigskip
%%%%%%%%%%%%%%%%%%%%
\subsection{A continuous maximal function estimate}

We begin with a discussion of the terms $\dsN$ appearing in the previous subsection. For a function $f\in L^1(\R^n)$ we use the notation $\widetilde{f}$ to denote the Fourier transform of $f$ over $\R^n$, by which we mean the unique function satisfying \[
f(x)=\int_{\R^n} \widetilde{f}(\xi)e(-x\cdot\xi)d\xi.\]
Define the measure $\sigma_N$ by \[
d\sigma_N(x)=\phi(x/N)\frac{d\mu(x)}{|\nabla\p(x)|}\]
where $\phi$ is a smooth bump function supported on $[-2,2]$ and identically one on $[-1,1]$, and $d\mu$ is the Euclidean surface measure on the surface in $\R^n$ defined by $\p(x)=0$. Birch (\cite{Bi}, Section 6) gives a thorough treatment of related integrals which, in particular, shows that  $\phi(x/N)|\nabla\p(x)|^{-1}$ is an $L^1(d\mu)$ function  when $\p$ is a regular form. Thus  $\sigma_N$  is a  measure supported on the surface patch $V_N=\{x\in\R^n:x\in[-2N,2N]^n,\,\p(x)=0\}$ which is absolutely continuous with respect to $\mu$.
Moreover, under the assumption that $\p$ has a nonsingular real solution in $V_1$ it follows that these measures are positive. 

These types of measures are treated  in (\cite{Mag}, Section 1), where the main analysis there is based on exponential sum estimates from \cite{Bi}.  From here we gain a important insight, namely that we have the representation \[
\ds_N(\xi)=\int_{\R}\int_{\R^n}\phi(x/N)e\left(\p(x)t+x\cdot\xi\right)\,dx\,dt.\]
Basic manipulations give the scaling property \[\ds_1(N\xi)=N^{d-n}\ds_{N}(\xi),\]
which motivates us to define  $d\sigma=\kappa \,d\sigma_1$ where $\kappa$ is the constant introduced in Lemma \ref{lemma2.4}.

We have the following decay estimate for $\ds$, which is proven in (\cite{Mag}, Section 1).

\begin{lemma}\label{lemma2.5}
Assume that $\p(x)=0$ has a nonsingular real solution in $V_1$. Then
\[
\ds(\xi)=O\left(\frac{1}{(1+|\xi|)^{c}}\right)
\]
in $\R^n$, where $c=\mathcal{B}(\p)(d-1)^{-1}2^{1-d}-1$.
\end{lemma}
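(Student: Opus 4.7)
The plan is to work from the representation
\[
\ds(\xi) = \kappa \int_{\R}\int_{\R^n}\phi(x)\, e\bigl(\p(x)t + x\cdot\xi\bigr)\, dx\, dt
\]
(valid for $\ds = \kappa\,\ds_1$), bound the inner oscillatory integral
\[
J(t,\xi) = \int_{\R^n}\phi(x)\,e\bigl(\p(x)t + x\cdot\xi\bigr)\,dx
\]
pointwise in $(t,\xi)$, and then integrate in $t$. The argument splits at the threshold $|t| \asymp |\xi|$, chosen so that on the support of $\phi$ the contribution $t\nabla\p(x)$ is, respectively, smaller or larger than $|\xi|$.

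For $|t| \< |\xi|$, with a sufficiently small implicit constant depending on $\sup_{x\in\mathrm{supp}\,\phi}|\nabla\p(x)|$, the gradient $t\nabla\p(x) + \xi$ of the phase in $x$ stays bounded below by a fixed multiple of $|\xi|$ throughout $\mathrm{supp}\,\phi$; repeated integration by parts in $x$ then yields $|J(t,\xi)| \< (1+|\xi|)^{-K}$ for any $K$, which contributes negligibly after integrating in $t$. For $|t| \gs |\xi|$ the phase in $x$ may have genuine stationary points and this technique fails. Here one invokes the Archimedean analogue of the Weyl-type bound from \cite{Bi} (the same mechanism that drives Lemma \ref{lemma2.2} in the discrete setting), namely
\[
|J(t,\xi)| \< (1 + |t|)^{-\mathcal{B}(\p)/((d-1)2^{d-1})}
\]
uniformly in $\xi$. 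The linear term $x\cdot\xi$ in the phase is annihilated after two applications of the Weyl differencing operator $\Delta_h f(x) = f(x+h)-f(x)$, so it does not disrupt the chain of differencing steps carried out on the degree-$d$ form $\p$. Integrating this bound over $|t| \gs |\xi|$ produces a tail of size $|\xi|^{-(c'-1)}$ with $c' = \mathcal{B}(\p)/((d-1)2^{d-1})$, matching the claimed exponent $c = c'-1$.

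The main obstacle is justifying the Weyl-type decay on $J(t,\xi)$ uniformly in $\xi$ in the large-$t$ regime. This is what is carried out in \cite[Section 1]{Mag} by transplanting Birch's exponential-sum Weyl-differencing machinery from \cite{Bi} to the Archimedean oscillatory-integral setting, and we would simply borrow that result rather than redo it. The hypothesis that $\p(x)=0$ admits a nonsingular real solution in $V_1$ enters here through the absolute convergence of the outer $t$-integral and the identification of $\ds$ with the Fourier transform of a bona fide positive surface measure, guaranteeing that there is in fact an honest object whose decay is being measured.
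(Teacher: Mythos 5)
Your outline is correct and coincides with how this estimate is actually obtained: the paper offers no independent proof of Lemma \ref{lemma2.5}, but quotes it from (\cite{Mag}, Section 1), where precisely your argument --- non-stationary phase in $x$ when $|t|$ is small compared with $|\xi|$, a Birch--Weyl bound $|J(t,\xi)|\lesssim(1+|t|)^{-\mathcal{B}(\p)(d-1)^{-1}2^{1-d}}$ uniform in $\xi$ for large $|t|$, and integration in $t$ over $|t|\gtrsim|\xi|$ costing one power --- is carried out. Since you defer the key uniform oscillatory-integral bound to the same source the paper cites, there is no gap; only the cosmetic points that a single Weyl differencing already eliminates the linear term $x\cdot\xi$, and that the nonsingular real solution serves to make $\sigma$ a genuine (positive, nontrivial) measure, while convergence of the $t$-integral comes from the rank condition.
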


An  important point to observe at the moment is that  $c$ in both Lemma \ref{lemma2.2} and Lemma \ref{lemma2.5} is strictly greater than $2$ when dealing with regular forms.

As in the proof of Theorem A, part of the argument relies on a comparison a with real variable maximal function analogue. For our purposes we define the continuous convolution operators by
\[
R_Nf(y)=\int_{\R^n}\widetilde{f}(\xi)\ds(N\xi)e(-y\cdot\xi)\,d\xi,\]
for suitable functions $f$ defined on $\R^n$, and the associated maximal operators \[
R_*f(y)=\sup_{N\geq1}|R_Nf(y)|.\]
As an application of Lemma \ref{lemma2.5} we achieve $L^p(\R^n)\to L^p(\R^n)$ estimates for $R_*$. Indeed, as we may write \[
R_Nf(y)=N^{d-n}\int f(y-x) \,d\sigma_N(x)\]
up to constants, it is easy to see that we only need to consider the supremum over the set of dyadic integers. This is something which is of course  true for $A_*$ as well, which is discussed below. A direct application of (\cite{Rubio}, Theorem A) gives a continuous maximal function inequality.

\begin{lemma}\label{lemma2.7}
If $\p$ is a regular form then $R_*$ is a bounded on $L^{p}(\R^n)$ for all $p>1$.
\end{lemma}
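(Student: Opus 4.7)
The plan is to recognize the operators $R_N$ as the standard dilation maximal operator associated with a fixed compactly supported measure on $\R^n$, and then to apply Rubio de Francia's theorem directly, as the excerpt already advertises. Using the homogeneity of $\p$ and the change of variables $x\mapsto Nx$, I would rewrite $R_Nf=f*\tau_N$, where $\tau_N$ is the $N$-dilate of the measure $\sigma:=\kappa\,d\sigma_1$. Since $\p$ has degree $d$, one has $|\nabla\p(Nx)|=N^{d-1}|\nabla\p(x)|$ while the surface measure on the cone $\{\p=0\}$ scales as $d\mu(Nx)=N^{n-1}d\mu(x)$; these factors combine with the $N^{d-n}$ prefactor to turn $R_N$ into convolution with a genuine dilate of $\sigma$, and in particular $\widehat{\tau_N}(\xi)=\ds(N\xi)$, matching the original Fourier-integral definition.

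I would next verify that $\sigma$ satisfies the standing hypotheses of a general dilation maximal theorem. The measure $\sigma$ is compactly supported in $[-2,2]^n$ because of the cutoff $\phi$, and it has finite total mass because Birch (\cite{Bi}, Section 6) shows $\phi\cdot|\nabla\p|^{-1}\in L^1(d\mu)$ whenever $\p$ is regular. A short preliminary argument restricts the supremum to dyadic $N=2^k$: for $N$ inside a single dyadic interval, $\tau_N-\tau_{2^k}$ can be compared by differentiating the cutoff in the dilation parameter, producing an error controlled by the Hardy--Littlewood maximal function.

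The main step is then to invoke Theorem A of Rubio de Francia \cite{Rubio}: for a compactly supported finite Borel measure on $\R^n$ whose Fourier transform obeys a power decay $|\hat\sigma(\xi)|\lesssim(1+|\xi|)^{-c}$, the associated dilation maximal operator is bounded on $L^p(\R^n)$ throughout the range claimed by the lemma. Lemma \ref{lemma2.5} supplies such decay with $c>2$ for regular forms, comfortably above the $1/2$-threshold that Rubio's argument needs. The only non-routine ingredient is this Fourier decay estimate, already recorded; otherwise the proof is bookkeeping, and the mild obstacle worth flagging is just the verification that $\sigma$ meets the compactness, finiteness, and regularity hypotheses of \cite{Rubio}, which Birch's integrability analysis handles directly.
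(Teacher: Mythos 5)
Your proposal follows the same route as the paper: identify $R_N$ as convolution with the $N$-dilate of the fixed finite, compactly supported (and positive) measure $\kappa\,d\sigma_1$ via the scaling $\ds_1(N\xi)=N^{d-n}\ds_N(\xi)$, reduce to dyadic $N$, and then apply Theorem A of \cite{Rubio} using the Fourier decay of Lemma \ref{lemma2.5} (with $c>2$ for regular forms). That is exactly the paper's one-line argument, and the main thrust is correct.

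The one detail where your justification would fail is the dyadic reduction. Differentiating the cutoff in the dilation parameter produces $\partial_t\tau_t$, which is still a \emph{singular} measure supported on the hypersurface $\{\p=0\}$ (with density involving $\nabla\phi(\cdot/t)\,|\nabla\p|^{-1}\,d\mu$); convolution with such a measure is not pointwise controlled by the Hardy--Littlewood maximal function, and summing the resulting errors over all dyadic blocks gives no maximal bound. Making this route rigorous would require square-function/derivative-decay estimates in the spirit of the full-dilation theorems of \cite{Rubio}, which is more than is needed. The intended easy argument uses positivity: for $f\geq 0$ and $2^k\leq N<2^{k+1}$ one has $\phi(x/N)\leq\phi'(x/2^{k+1})$ for a slightly fatter bump $\phi'$, and $N^{d-n}\approx 2^{k(d-n)}$, so $R_Nf\< R'_{2^{k+1}}f$ pointwise, where $R'$ is the same type of operator with $\phi$ replaced by $\phi'$; the dyadic maximal theorem then applies to $R'$. (Also, the ``$1/2$-threshold'' you attribute to Theorem A of \cite{Rubio} is not the relevant condition -- any positive power of decay suffices for the dyadic maximal operator -- but since $c>2$ this is harmless.)
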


\bigskip
\subsection{Approximations II}

Here we  look at some further modifications to the approximations of the $\what$ that are going to be needed. Define the terms 
\[
\Omega_{N, q}(\xi)=\sum_{a\in Z_q}\sum_{\bfa\in Z_q^n}F_q(a,\bfa)\zeta(q^{2}(\xi-\bfa/q))\ds(N(\xi-\bfa/q)).
\]
The purpose for introducing  these terms  is somewhat twofold. The first observation is that these terms are better suited for  $\ell^p$  results when $p<2$, something which manifests itself in the next result. Here we begin our use of the notation $\mathscr{F}^{-1}$  for the inverse Fourier transform. 

\begin{lemma}\label{lemma2.8}
For $q$ fixed and $p>1$ we have
\[
|| \sup_{N\geq 1}|\mathscr{F}^{-1}(\Omega_{N, q}\widehat{f})|\, ||_{\ell^p}\<||f||_{\ell^p}.
\]
The implied constant is independent of $q$. 
\end{lemma}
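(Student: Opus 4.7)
The plan is to realize $\sup_{N\geq 1}|\mathscr{F}^{-1}(\Omega_{N,q}\widehat f)|$ as a Magyar--Stein--Wainger-style transplantation of the continuous maximal operator $R_*$, and then appeal to Lemma \ref{lemma2.7}. First I would rewrite
\[
\Omega_{N,q}(\xi)=\sum_{\bfa\in Z_q^n} c_\bfa\,\Phi_N(\xi-\bfa/q),\qquad c_\bfa=\sum_{a\in Z_q}F_q(a,\bfa),\qquad \Phi_N(\eta)=\zeta(q^{2}\eta)\,\ds(N\eta),
\]
and observe that the translates $\Phi_N(\,\cdot-\bfa/q)$ have pairwise disjoint supports on the torus: each is supported in a sup-norm ball of radius at most $1/(5q^{2})$ around $\bfa/q$, while the lattice $\{\bfa/q:\bfa\in Z_q^n\}$ has spacing $1/q>2/(5q^{2})$ for every $q\geq 1$.

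This disjointness places $\Omega_{N,q}$ in exactly the configuration to which the transference principle of \cite{MSW} applies, and since both $c_\bfa$ and the cutoff $\zeta(q^{2}\cdot)$ are $N$-independent the supremum passes through the transplantation to yield
\[
\Bigl\|\sup_{N\geq 1}|\mathscr{F}^{-1}(\Omega_{N,q}\widehat f)|\Bigr\|_{\ell^{p}(\Z^n)}\<\bigl(\max_{\bfa}|c_\bfa|\bigr)\cdot\Bigl\|\sup_{N\geq 1}|\mathscr{F}^{-1}_{\R^n}(\Phi_N\widetilde g)|\Bigr\|_{L^{p}(\R^n)\to L^{p}(\R^n)}\|f\|_{\ell^{p}}.
\]
Since $T_{\Phi_N}=T_{\zeta(q^{2}\cdot)}\circ T_{\ds(N\cdot)}$ and the convolution kernel of $T_{\zeta(q^{2}\cdot)}$ is a dilate of $\check{\zeta}$ with $L^{1}$-norm independent of $q$, Young's inequality combined with Lemma \ref{lemma2.7} controls the right-hand side by a $p$-dependent constant, uniformly in $q$, for every $p>1$.

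It then remains to establish $\max_{\bfa}|c_\bfa|=O(1)$ uniformly in $q$. Partition $Z_q$ by reduced-fraction structure: every $a\in Z_q$ is uniquely of the form $a=(q/d)a'$ with $d\mid q$ and $a'\in U_d$. Substituting $s=s_0+dt$ with $s_0\in Z_d^n$ and $t\in Z_{q/d}^n$ and using $\p(s_0+dt)\equiv \p(s_0)\pmod d$ produces the identity
\[
F_q(a,\bfa)=\mathbf{1}_{(q/d)\mid\bfa}\cdot F_d\bigl(a',\,\bfa/(q/d)\bigr),
\]
after which Lemma \ref{lemma2.2} gives
\[
|c_\bfa|\<\sum_{d\mid q}\phi(d)\,d^{-c}\<\sum_{d\mid q} d^{1-c}=O(1),
\]
with convergence secured by the regularity hypothesis $c>2$.

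The main obstacle is the transference step: the MSW principle in its original form is stated for a single multiplier, so one should verify that $\sup_N$ on the discrete side really is dominated by $\sup_N$ on the continuous side with no loss depending on $q$. Because $c_\bfa$ and the window $\zeta(q^{2}\cdot)$ are $N$-independent and the transplantation is linear and positivity-preserving at the level of maximal functions, this poses no serious difficulty once the disjoint-support decomposition and the uniform coefficient bound above are in place.
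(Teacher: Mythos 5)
Your reduction has a genuine gap at the transference step. The Magyar--Stein--Wainger principle (their Corollary 2.1) applies to a $1/q$-periodization of a \emph{single} multiplier $m$ supported in a small cube, transplanting its continuous $L^p$ operator norm to $\ell^p$; it does not apply to $\sum_{\bfa\in Z_q^n}c_\bfa\,\Phi_N(\xi-\bfa/q)$ with $\bfa$-dependent coefficients, and the inequality you write, bounding the discrete maximal norm by $\max_\bfa|c_\bfa|$ times the transferred continuous maximal norm, is false in general for $p\neq 2$. Disjointness of the frequency supports lets you pull out $\sup_\bfa|c_\bfa|$ only at $\ell^2$ (Plancherel); for $p$ near $1$ a multiplier taking values $\pm1$ on the disjoint cubes around the points $\bfa/q$ has $\max_\bfa|c_\bfa|=1$ but $\ell^p$ multiplier norm growing with $q$. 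So the phrase ``the transplantation is linear and positivity-preserving, so this poses no serious difficulty'' is exactly where the missing argument lives, and it cannot be repaired by the coefficient bound alone.

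The paper avoids this by factoring the multiplier: since $\zeta(q^2\eta)\zeta(q\eta)=\zeta(q^2\eta)$ for $q\geq2$, one has $\Omega_{N,q}=W_q\cdot\bigl(\sum_{\bfa\in Z_q^n}\zeta(q(\xi-\bfa/q))\ds(N(\xi-\bfa/q))\bigr)$ with $W_q(\xi)=\sum_{a\in Z_q}\sum_{\bfa\in Z_q^n}F_q(a,\bfa)\zeta(q^2(\xi-\bfa/q))$. The second factor is a genuine periodization of one bump and is handled by MSW transference together with Lemma \ref{lemma2.7}, uniformly in $q$; the maximal $\ell^p$ bound is thereby reduced to the fixed-multiplier estimate $||\mathscr{F}^{-1}(W_q\widehat f)||_{\ell^p}\<||f||_{\ell^p}$. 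That estimate is obtained by interpolating an $\ell^2$ bound (essentially your computation, via $\sup_\bfa|F_q(a,\bfa)|$ and Lemma \ref{lemma2.2}) with an $\ell^1$ bound, and the $\ell^1$ bound is where the real arithmetic input enters: the convolution kernel of $W_q$ involves $\sum_{a\in Z_q}\sum_{\bfa\in Z_q^n}F_q(a,\bfa)e(-y\cdot\bfa/q)=q\,\1_{\p(y)\equiv0\,(q)}$, and its $\ell^1$ size is controlled by $q^{1-n}\#\{s\in Z_q^n:\p(s)\equiv0\ (q)\}=O(1)$ (Lemma \ref{lemma4.2}), not by $\max_\bfa|c_\bfa|$. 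Your identity $F_q(a,\bfa)=\1_{(q/d)\mid\bfa}F_d(a',\bfa/(q/d))$ and the conclusion $\max_\bfa|c_\bfa|=O(1)$ are correct, but they only supply the $\ell^2$ half; to reach all $p>1$ you must add the $\ell^1$ kernel estimate (or an equivalent substitute) and interpolate.
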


The other observation observation about  the  $\Omega_{N,q}$  is as follows. Set $Q_j=2^j!$ and notice that \[
\Omega_{N, Q_j}(\xi)=\sum_{a\in Z_{Q_j}}\sum_{\bfa\in Z_{Q_j}^n}F_{Q_j}(a,\bfa)\zeta(Q_j^2(\xi-\bfa/Q_j))\ds(N(\xi-\bfa/Q_j))\]
\[
=\sum_{q|Q_j}\sum_{a\in U_q}\sum_{q_1,...,q_n|Q_j}\sum_{\bfa\in U_{\bfq}}F(a,q,\bfa,\bfq)\zeta(Q_j^2(\xi-\bfa/\bfq))\ds(N(\xi-\bfa/\bfq))\]
\[
=\sum_{q|Q_j}\sum_{a\in U_q}\sum_{q_1,...,q_n|q}\sum_{\bfa\in U_{\bfq}}F(a,q,\bfa,\bfq)\zeta(Q_j^2(\xi-\bfa/\bfq))\ds(N(\xi-\bfa/\bfq))\]
\[
=\sum_{q|Q_j}\sum_{a\in U_q}\sum_{\bfa\in Z^n_{q}}F_q(a,\bfa)\zeta(Q_j^2(\xi-\bfa/q))\ds(N(\xi-\bfa/q)),\]
where we have made use of Lemma \ref{lemma2.1} and the observation of Lemma \ref{lemma2.3} while using the notation $U_\bfq$ to denote $U_{q_1}\times...\times U_{q_n}$. The last line is equal to \[
\sum_{l=0}^{j-1}\sum_{q\in I_l}\sum_{a\in U_q}\sum_{\bfa\in Z^n_q}F_q(a,\bfa)\zeta(Q_j^2(\xi-\bfa/q))\ds(N(\xi-\bfa/q))\]\[
+\sum_{q|Q_j;\,q\geq 2^j}\sum_{a\in U_q}\sum_{\bfa\in Z^n_q}F_q(a,\bfa)\zeta(Q_j^2(\xi-\bfa/q))\ds(N(\xi-\bfa/q)).\]
This motivates the decomposition \[
\Omega_{N, Q_j}=\sum_{l=0}^{j-1}M_{N,l}+E^{(1)}_{N,j}+E^{(2)}_{N,j}\]
where \[
E^{(1)}_{N,j}=\sum_{l=0}^{j-1}\sum_{q\in I_l}\sum_{a\in U_q}\sum_{\bfa\in Z^n_q}F_q(a,\bfa)\left(\zeta(Q_j^2(\xi-\bfa/q))-\zeta(10^l(\xi-\bfa/q))\right)\ds(N(\xi-\bfa/q))\]\
and \[
E^{(2)}_{N,j}=\sum_{q|Q_j;\,q\geq 2^j}\sum_{a\in U_q}\sum_{\bfa\in Z^n_q}F_q(a,\bfa)\zeta(Q_j^2(\xi-\bfa/q))\ds(N(\xi-\bfa/q)).\]

Lemma \ref{lemma2.8} can now be viewed as a method of providing an estimates which  simultaneously controls a large collection of the `major arc'  terms. This of course relies on our ability to adequately control the $E^{(i)}_{N,j}$, forming our second observation about the $\Omega$ terms. That this is indeed the case for the error terms forms the content of the next two results. 

\begin{lemma}\label{lemma2.10} If $\p$ a is regular form then 
\[
||\sup_{N=2^k;\,k\geq 4^{j-1}} |\mathscr{F}^{-1}(E^{(1)}_{N,j}\widehat{f})|\,||_{\ell^2}\< 2^{-\delta j}.\]
for all $j\geq 1$.\end{lemma}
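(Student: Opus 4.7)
The plan is to bound the maximal function by its square function via $|\sup_k X_k|^2 \le \sum_k |X_k|^2$ and apply Plancherel, reducing Lemma \ref{lemma2.10} to showing
\[
\sup_{\xi \in \Pi^n} \sum_{k \geq 4^{j-1}} |E^{(1)}_{2^k,j}(\xi)|^2 \< 2^{-2\delta j}.
\]
The first key observation is that on the support of the ring factor $\zeta(Q_j^2\eta) - \zeta(10^l\eta)$ (with $\eta = \xi - \bfa/q$), the displacement satisfies $|\eta|_\infty \geq 1/(10 Q_j^2)$, because both bumps equal $1$ on the smaller box (using $Q_j^2 \gg 10^l$ for $l \leq j-1$, which is immediate from $Q_j = (2^j)!$). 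Coupled with the Fourier decay of $\ds$ from Lemma \ref{lemma2.5} (exponent $c_2$), this yields the uniform bound $|\ds(2^k\eta)| \< \min(1, (2^k/Q_j^2)^{-c_2})$ on the support. The second key observation is geometric: for each fixed $l \geq 1$, distinct rational centers $\bfa/q \neq \bfa'/q'$ with $q,q' \in I_l$ are $\ell^\infty$-separated in $\Pi^n$ by at least $1/(qq') \geq 4^{-l-1}$, which exceeds twice the bump radius $1/(5 \cdot 10^l)$. Hence at most one distinct center is active at any $\xi$, and at most $2^l$ pairs $(q,\bfa)$ share that center (the case $l=0$ is trivial since only the center $0$ occurs).

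Applying Lemma \ref{lemma2.2} (exponent $c_1 > 2$) to bound $|F_q(a,\bfa)| \< q^{-c_1}$ and absorbing the factor $|U_q| \leq q \< 2^l$ from summing over $a \in U_q$, the $l$th layer contributes at most $2^{l(2-c_1)} \min(1,(2^k/Q_j^2)^{-c_2})$ to $|E^{(1)}_{2^k,j}(\xi)|$. Summing over $l = 0,\ldots,j-1$ is geometric in $l$ (since $c_1 > 2$), producing the uniform bound
\[
|E^{(1)}_{2^k,j}(\xi)| \< \min\bigl(1, (2^k/Q_j^2)^{-c_2}\bigr).
\]
Squaring and summing over $k \geq 4^{j-1}$, split at $k_0 := 2\log_2 Q_j$: for $j$ large enough that $4^{j-1} \geq k_0$ every $k$ lies in the decay regime and the total is $\< Q_j^{4c_2} \cdot 2^{-2c_2 \cdot 4^{j-1}}$. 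Using $\log_2 Q_j \< j 2^j$ (Stirling on $(2^j)!$) together with $4^{j-1} = 2^{2j-2}$, the exponent $4c_2 \log_2 Q_j - 2c_2 \cdot 4^{j-1}$ diverges to $-\infty$ faster than any linear function of $j$, so the bound is $O(2^{-2\delta j})$ for a suitable small $\delta > 0$. For the finitely many small $j$ where $4^{j-1} < k_0$, the total is $O(\log_2 Q_j) = O(1)$, absorbed into the implicit constant.

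The principal obstacle is the geometric counting step: establishing the Farey-type separation of the centers $\bfa/q$ with $q \in I_l$ in $\Pi^n$ relative to the bump scale $10^{-l}$, together with the correct count of tuples $(q, \bfa)$ that can share a single active center. Once this is in place, the remainder is a straightforward combination of the exponential sum estimate in Lemma \ref{lemma2.2} with the Fourier decay in Lemma \ref{lemma2.5}, together with the observation that $2^{4^{j-1}}$ eventually dwarfs $((2^j)!)^2$.
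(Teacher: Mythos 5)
Your argument is correct and follows essentially the same route as the paper: reduce by Plancherel to a uniform bound on the multiplier $E^{(1)}_{2^k,j}$, observe that on the support of $\zeta(Q_j^2\eta)-\zeta(10^l\eta)$ one has $|\eta|\gtrsim Q_j^{-2}$ so Lemma \ref{lemma2.5} gives decay $\lesssim (2^k/Q_j^2)^{-c}$, and then sum over $k\geq 4^{j-1}$ using $Q_j\leq 2^{j2^j}$, the point being that $2^{-c\,4^{j-1}}$ overwhelms any power of $Q_j$. The only differences are minor: the paper sums the $\ell^2$ norms over $k$ directly and uses the crude count $2^{2j}$ of triples $(l,q,a)$ with $|F_q(a,\bfa)|\leq 1$, so your square-function step and the Farey-separation counting (which is correct) are refinements that are not actually needed here.
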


\begin{lemma}\label{lemma2.11}
We have 
\[
||\sup_{N=2^k;\,k\geq 4^{j-1}}  |\mathscr{F}^{-1}(E^{(2)}_{N,j}\widehat{f})|\,||_{\ell^2}\< 2^{-\delta j}\]
for regular forms $\p$ when $j\geq1$.
\end{lemma}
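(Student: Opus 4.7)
The plan is to decompose $E^{(2)}_{N,j}$ into a single piece for each pair $(q,a)$ with $q \mid Q_j$, $q \geq 2^j$, and $a \in U_q$, control each piece via the Magyar--Stein--Wainger transference principle (as used in the proof of Lemma \ref{lemma2.8}), and sum the resulting bounds using the exponential sum decay of Lemma \ref{lemma2.2} together with the hypothesis $q \geq 2^j$. Concretely, for each fixed $(q,a)$ set
\[
K_{N,q,a}(\xi) \,=\, \sum_{\bfa \in Z_q^n} F_q(a,\bfa)\,\zeta(Q_j^2(\xi-\bfa/q))\,\ds(N(\xi-\bfa/q)),
\]
so that $E^{(2)}_{N,j} = \sum_{q \mid Q_j,\, q \geq 2^j}\sum_{a\in U_q} K_{N,q,a}$. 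Since $\zeta$ is supported in $[-1/5,1/5]^n$, the bumps centered at distinct rationals $\bfa/q$ have pairwise disjoint supports on $\Pi^n$ (the centers being separated by at least $1/q \gg Q_j^{-2}$).

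For each fixed $(q,a)$ I would treat $\Psi_N(\xi) := \zeta(Q_j^2\xi)\,\ds(N\xi)$ as the relevant continuous multiplier and apply the transference principle exactly as in Lemma \ref{lemma2.8}, viewing the coefficients $\{F_q(a,\bfa)\}_{\bfa}$ as sampled values attached to disjoint translates of $\Psi_N$. Since the continuous maximal operator $g \mapsto \sup_N |\mathscr{F}^{-1}(\Psi_N \widetilde{g})|$ is dominated on $L^2(\R^n)$ by $R_*$ (the smooth factor $\zeta(Q_j^2\xi)$ being a harmless multiplier bounded uniformly in $j$), Lemma \ref{lemma2.7} combined with transference gives
\[
\Big\|\sup_{N=2^k,\,k\geq 4^{j-1}}|\mathscr{F}^{-1}(K_{N,q,a}\widehat{f})|\Big\|_{\ell^2} \< \max_{\bfa\in Z_q^n}|F_q(a,\bfa)|\,\|f\|_{\ell^2}\< q^{-c}\|f\|_{\ell^2},
\]
where $c>2$ is the exponent of Lemma \ref{lemma2.2}. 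Summing by the triangle inequality over $a \in U_q$ (at most $q$ terms) and then over divisors $q \geq 2^j$ of $Q_j$ yields
\[
\Big\|\sup_N|\mathscr{F}^{-1}(E^{(2)}_{N,j}\widehat{f})|\Big\|_{\ell^2} \< \|f\|_{\ell^2}\sum_{q\geq 2^j} q^{1-c}\< 2^{-j(c-2)}\|f\|_{\ell^2},
\]
giving the required bound with $\delta = c-2 > 0$.

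The main obstacle will be executing the transference step cleanly with the ``narrow'' bump width $Q_j^{-2}$ in place of the more natural width $q^{-2}$. The point is that since $Q_j \geq q$, the narrower bumps are still pairwise disjoint and sit inside the natural bumps, so one can absorb $\zeta(Q_j^2\xi)$ into the continuous multiplier $\Psi_N$ without affecting the $L^2(\R^n)$ boundedness of the associated continuous maximal operator; once that is in place the summation in $q$ is automatic thanks to the gain $q^{1-c}$ with $c>2$, and no analogue of the more delicate Bourgain-type argument needed for $E^{(1)}_{N,j}$ is required here, because the constraint $q \geq 2^j$ already furnishes the decay.
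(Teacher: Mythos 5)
Your proposal is correct and follows essentially the same route as the paper: the paper also splits $E^{(2)}_{N,j}$ into the pieces indexed by $(q,a)$ with $q\mid Q_j$, $q\geq 2^j$, applies the Magyar--Stein--Wainger transference (their Corollary 2.1) together with the continuous maximal bound of Lemma \ref{lemma2.7} to each piece, bounds it by $\sup_{\bfa}|F_q(a,\bfa)|\,\|f\|_{\ell^2}\< q^{-c}\|f\|_{\ell^2}$ via Lemma \ref{lemma2.2}, and sums $q^{1-c}$ over $q\geq 2^j$ using $c>2$. Your remark that the narrower bump $\zeta(Q_j^2\cdot)$ is harmless (disjoint supports, uniform-in-$j$ continuous $L^2$ bound) is exactly the point implicitly used in the paper's application of the transference principle.
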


The final result presented here is another $\ell^2$ estimate. 
\begin{lemma}\label{lemma2.12}
We have the estimate
\[
||\sup_{k\geq 4^{j-1}}|\omega_{2^k}*f-\mathscr{F}^{-1}(\sum_{l=0}^{j-1}M_{2^k,l}\widehat{f})|\,||_{\ell^2}\< 2^{-\delta j}\]
when $\p$ is a regular from.
\end{lemma}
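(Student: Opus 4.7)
The plan is to combine Lemma \ref{lemma2.6} with a level-by-level application of Lemma \ref{lemma2.115}. Setting $T_{N,l}f:=\mathscr{F}^{-1}(M_{N,l}\widehat{f})$ and applying Lemma \ref{lemma2.6} at $N=2^k$ gives
\[
\widehat{\omega}_{2^k}(\xi)-\sum_{l=0}^{j-1}M_{2^k,l}(\xi)=\sum_{l\geq j}M_{2^k,l}(\xi)+R_{2^k}(\xi),\qquad|R_{2^k}(\xi)|\<2^{-\delta k}.
\]
Using the pointwise bound $\sup_k|\cdot|\leq(\sum_k|\cdot|^2)^{1/2}$ followed by Plancherel, the contribution of $R_{2^k}$ to the $\ell^2$ norm of the maximal operator is bounded by $\bigl(\sum_{k\geq 4^{j-1}}2^{-2\delta k}\bigr)^{1/2}\|f\|_{\ell^2}\<2^{-\delta\cdot 4^{j-1}}\|f\|_{\ell^2}$, which is far smaller than $2^{-\delta j}$. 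It therefore suffices to control $\sup_{k\geq 4^{j-1}}\bigl|\sum_{l\geq j}T_{2^k,l}f\bigr|$ in $\ell^2$.

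By the triangle inequality in $l$, it is enough to prove
\[
\left\|\sup_{k\geq 4^{j-1}}|T_{2^k,l}f|\right\|_{\ell^2}\<2^{-\delta l}\|f\|_{\ell^2}\qquad\text{for every }l\geq j,
\]
since summing a geometric series over $l\geq j$ then yields the claimed $2^{-\delta j}$ decay. I split the supremum at $k=4^{l-1}$: the range $k\geq 4^{l-1}$ is exactly the content of Lemma \ref{lemma2.115}, while the range $4^{j-1}\leq k<4^{l-1}$ is treated via the crude lacunary bound
\[
\left\|\sup_{4^{j-1}\leq k<4^{l-1}}|T_{2^k,l}f|\right\|_{\ell^2}^2\leq\sum_k\|T_{2^k,l}f\|_{\ell^2}^2\leq 4^{l-1}\sup_N\|M_{N,l}\|_\infty^2\|f\|_{\ell^2}^2.
\]
The bumps $\zeta(10^l(\xi-\bfa/q))$ attached to distinct fractions with $q\in I_l$ have pairwise disjoint supports (their mutual separation $\gtrsim 4^{-l}$ dominates the bump radius $\asymp 10^{-l}$), so evaluating $M_{N,l}$ pointwise reduces to a single aggregated coefficient $\sum_{(q,a,\bfa)}F_q(a,\bfa)$ over the triples contributing a common fraction; by Lemma \ref{lemma2.2} and a direct count of these triples, this is bounded by a fixed power of $2^{-l}$, giving an $N$-uniform $L^\infty$ bound on $M_{N,l}$.

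The principal obstacle is to ensure that this $L^\infty$ decay is strong enough to overpower the $O(4^l)$ cardinality in the lacunary bound. Lemma \ref{lemma2.2} applied via triangle inequality over contributing triples may be too coarse on its own; the expected remedy is to call on the refined exponential sum estimates developed in Section 4, which are tailored precisely for the aggregated sums of $F_q$ over triples in a dyadic range $I_l$ reducing to a common fraction. Once adequate level-wise decay is secured, the triangle inequality in $l$ and geometric summation complete the argument.
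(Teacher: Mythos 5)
Your overall skeleton matches the paper's: apply Lemma \ref{lemma2.6} to replace $\widehat{\omega}_{2^k}-\sum_{l<j}M_{2^k,l}$ by the tail $\sum_{l\geq j}M_{2^k,l}$ plus an $O(2^{-\delta k})$ remainder, sum the remainder over $k\geq 4^{j-1}$, and then reduce by the triangle inequality in $l$ to a level-wise maximal estimate $\|\sup_k|\mathscr{F}^{-1}(M_{2^k,l}\widehat{f})|\|_{\ell^2}\<2^{-\delta l}\|f\|_{\ell^2}$ for each $l\geq j$. The divergence, and the genuine gap, is in how you treat the range $4^{j-1}\leq k<4^{l-1}$. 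Your crude lacunary bound needs $\sup_N\|M_{N,l}\|_{L^\infty}\<2^{-(1+\delta)l}$ to absorb the cardinality factor $4^{l-1}$. But the best pointwise bound available is obtained exactly as you describe: for fixed $\xi$ at most one $\bfa$ per $q$ contributes, and Lemma \ref{lemma2.2} then gives $|M_{N,l}(\xi)|\<\sum_{q\in I_l}q^{1-c}\approx 2^{(2-c)l}$. For regular forms the paper only guarantees $c>2$, so this exponent $c-2$ can be arbitrarily small, and $4^{l-1}\cdot 2^{2(2-c)l}$ is summable only when $c>3$. The "refined exponential sum estimates of Section 4" you hope will rescue this do not exist for this purpose: Lemma \ref{lemma4.2} is a congruence-count bound (used for the $\ell^1$ estimate of Lemma \ref{lemma7.1}), and Lemmas \ref{lemma2.1} and \ref{lemma2.3} are structural identities; none of them improves the decay of the aggregated sums over $q\in I_l$ beyond $q^{1-c}$. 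So as written your argument fails in the middle range whenever $2<c\leq 3$.

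The fix — and the paper's actual argument — is that no splitting of the $k$-range is needed at all. The proof of Lemma \ref{lemma2.115} never uses the restriction $k\geq 4^{j-1}$: it factors each major-arc piece through $\zeta((10^l/2)(\xi-\bfa/q))$, applies the transference result (\cite{MSW}, Corollary 2.1) together with the continuous maximal bound of Lemma \ref{lemma2.7} (which controls the supremum over \emph{all} dilations $N\geq1$), and then uses Plancherel plus Lemma \ref{lemma2.2}. Hence one has directly
\[
\Big\|\sup_{N\geq 1}\big|\mathscr{F}^{-1}(M_{N,l}\widehat{f})\big|\Big\|_{\ell^2}\<2^{-\delta l}\|f\|_{\ell^2}
\]
for every $l$, which in particular covers $\sup_{k\geq 4^{j-1}}$ for every $l\geq j$; summing the geometric series in $l$ then finishes the proof, exactly as the paper does by "appealing to the method of Lemma \ref{lemma2.115}". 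If you replace your splitting step by this observation, your write-up becomes correct and coincides with the paper's proof.
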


%%%%%%%%%%%%%%%%%%%%%%%%%%%%%%%%%%%%
\vspace{.2in}
\section{Proof of Theorem \ref{thm11}}

A basic observation is that we only  need to consider the supremum over $N$ of the form $2^k$, $k\geq1$, as when $f\geq0$ we see that\[
\sup_{2^k\leq N<2^{k+1}}A_Nf\leq \frac{1}{r(2^k)}\sum_{x\in[2^{k+1}]^n}f(y-x)=\frac{r(2^{k+1})}{r(2^k)}A_{2N}f\]
and $r(2^{k+1})/r(2^k)\<1$ independent of $k$.

To proceed, we let \[
Q_j=2^j!\] 
as above and  set \[
H_j=[4^{j-1},4^{j})\]
 for $j\geq 1$.  We slightly alter previous notations for simplicity:
 \[
\Omega_{k, j}(\xi):=\Omega_{2^k,Q_j}(\xi)=\sum_{a\in Z_{Q_j}}\sum_{\bfa\in Z_{Q_j}^n}F_{Q_j}(a,\bfa)\zeta(Q_j^2(\xi-\bfa/Q_j))\dsk(\xi-\bfa/Q_j)
\]
for $j\leq j_0$, $k\in H_{j_0}$, where $\dsk(\xi)$ is used to denote $\ds(2^k\xi)$. Also tet
$K_{k}$ be the convolution kernel for $A_{2^k}$, i.e. $K_k=\omega_{2^k}$ and $A_{2^k}f=\omega_{2^f}f$ is the operator appearing in Theorem \ref{thm11}.

When $k\in H_{j_0}$ we can write
\[
K_{k}*f=\mathscr{F}^{-1}(\Omega_{k, 1}\widehat{f})+\mathscr{F}^{-1}((\Omega_{k, 2}-\Omega_{k, 1})\widehat{f})
+...+(K_k*f-\mathscr{F}^{-1}(\Omega_{k, j_0}\widehat{f}))\]
which gives \eq\label{3.0}
\sup_{k\geq 1}|K_k*f|\leq \sum_{j=1}^\infty\sup_{k\geq 4^{j-1}}|\mathscr{F}^{-1}((\Omega_{k, j}-\Omega_{k, j-1})\widehat{f}))|+\sum_{j_0=1}^\infty\sup_{k\in H_{j_0}}|K_k*f -\mathscr{F}^{-1}(\Omega_{k, j_0}\widehat{f})|,\ee
where it is to be understood that $\Omega_{k,0}=0$. Take the $\ell^p$ norm of \eqref{3.0} and apply the triangle inequality. Then, for all $p>1$, we need to obtain estimates for \[
||\sup_{k\geq4^{j-1}}|\mathscr{F}^{-1}(\Omega_{k, j}-\Omega_{k, j-1})\widehat{f})| ||_{\ell^p}\]
and for \[
||\sup_{k\in H_{j_0}}|K_k*f -\mathscr{F}^{-1}(\Omega_{k, j_0}\widehat{f})|\,||_{\ell^p}.\]
For the former, we see that \eq\label{3.3}
||\sup_{k\geq4^{j-1}}|\mathscr{F}^{-1}(\Omega_{k, j}-\Omega_{k, j-1})\widehat{f})| ||_{\ell^p}\< ||f||_{\ell^p}\ee
for all $j>0$ by the triangle inequality and Lemma \ref{lemma2.8}. For the latter we have that \eq\label{3.4}
||\sup_{k\in H_{j_0}}|K_k*f -\mathscr{F}^{-1}(\Omega_{k, j_0}\widehat{f})|\,||_{\ell^p}\< j_0||f||_{\ell^p}\ee
for all $p>1$. This estimate  follows  by the triangle inequality, Lemma \ref{lemma2.8}, and two applications of Lemma \ref{lemma2.9}, as\[
||\sup_{2^{4^{j_0-1}}\leq N<2^{4^{j_0}}}|A_Nf|\,||_{\ell^p}\leq ||\sup_{2^{4^{j_0-1}}\leq N<2^{2\cdot4^{j_0-1}}}|A_Nf|\,||_{\ell^p}+||\sup_{2^{2\cdot4^{j_0-1}}\leq N<2^{4^{j_0}}}|A_Nf|\,||_{\ell^p}\< j_0.\]

Now \eqref{3.3} and \eqref{3.4} need to be interpolated against stronger $\ell^2$ estimates. Provided the estimates at $\ell^2$ are of the form $2^{-\delta j}$ and $2^{-\delta j_0}$ (resp.), it follows that interpolation between the $\ell^2$ estimates and the $\ell^{p_0}$ estimates, for any fixed $p_0>1$, on the left hand sides of \eqref{3.3}  and \eqref{3.4} yield terms that are summable in $j$  and $j_0$ (resp.) and thus proving  Theorem \ref{thm11}.

We consider \eq\label{3.5}
||\sup_{k\geq 4^{j-1}}|K_k*f-\mathscr{F}^{-1}(\Omega_{k,j}\widehat{f})|\,||_{\ell^2}.\ee
This can be estimated by \[
||\sup_{k\geq 4^{j-1}}|K_k*f-\mathscr{F}^{-1}(\sum_{l=0}^{j-1}M_{2^k,l}\widehat{f})|\,||_{\ell^2}
+||\sup_{k\geq 4^{j-1}}|\mathscr{F}^{-1}((\sum_{l=0}^{j-1}M_{2^k,l}-\Omega_{k,j})\widehat{f})|\,||_{\ell^2}\]\[
=||\sup_{k\geq 4^{j-1}}|K_k*f-\mathscr{F}^{-1}(\sum_{l=0}^{j-1}M_{2^k,l}\widehat{f})|\,||_{\ell^2}
+||\sup_{k\geq 4^{j-1}}|\mathscr{F}^{-1}(E^{(1)}_{k,j}\widehat{f})|\,||_{\ell^2}+||\sup_{k\geq 4^{j-1}}|\mathscr{F}^{-1}(E^{(2)}_{k,j})\widehat{f})|\,||_{\ell^2}.\]
Each of these terms is $O(2^{-\delta j})$ by Lemmas \ref{lemma2.10}, \ref{lemma2.11}, and \ref{lemma2.12}.

To finish the argument we estimate \[
||\sup_{k\geq 4^{j-1}}|\mathscr{F}^{-1}((\Omega_{k,j}-\Omega_{k,j-1})\widehat{f})|\,||_{\ell^2}\]
by observing that \[
\Omega_{k,j}-\Omega_{k,j-1}=M_{2^k,j-1}+E^{(1)}_{k,j}-E^{(1)}_{k,j-1}+E^{(2)}_{k,j}-E^{(2)}_{k,j-1}.\]
The terms arising from the $E^{(i)}_{k,j}$ can be treated by applying Lemmas \ref{lemma2.10} and \ref{lemma2.11}. The remaining term arising from $M_{2^k,j}$ is treated by Lemma \ref{lemma2.115}. Summing these estimates  gives a bound of the form $O(2^{-\delta j})$. Finally notice that \[
||\sup_{k\in H_{j_0}}|K_k*f -\mathscr{F}^{-1}(\Omega_{k, j_0}\widehat{f})|\,||_{\ell^p}\leq||\sup_{k\geq 4^{j_0-1}}|K_k*f -\mathscr{F}^{-1}(\Omega_{k, j_0}\widehat{f})|\,||_{\ell^p},\]
so estimate \eqref{3.5} completes the proof.

\vspace{.2in}
%%%%%%%%%%%%%%%%%%%%%%%%%%%%
\section{Exponential sums}

In this section  we  provide proofs of  the results related to  exponential sum statements presented in section 2.1. First we consider the observation of Lemma \ref{lemma2.3}, and then proceed to the proof of  Lemma \ref{lemma2.1}. There is also another previously unstated result  treated here that is  needed in the proof of Lemma \ref{lemma2.8}. 

For Lemma \ref{lemma2.3} we take a function $g$ as stated and then \[
\sum_{\bfa\in Z_q^n}F_q(a,\bfa)g(\bfa/q)=q^{-n}\sum_{s\in Z^q}e(\p(s)a/q)\sum_{\bfa\in Z_q^n}e(\bfa\cdot s/q)g(\bfa/q)
\]\[
=q^{-n}\sum_{s\in Z_q^n}e(\p(s)a/q)\sum_{q_1|q}...\sum_{q_n|q}\sum_{a_1\in U_{q_1}}...\sum_{a_n\in U_{q_n}}e(a_1\cdot s_1/q_1)...e(a_n\cdot s_n/q_n)g(a_q/q_1,...,a_n/q_n)\]\[
=q^{-n}\sum_{s\in Z_q^n}e(\p(s)a/q)\sum_{q_1|q}...\sum_{q_n|q}\sum_{\bfa\in U_\bfq}e(s\cdot \bfa/\bfq)g(\bfa/\bfq)\]\[
=\sum_{q_1|q}...\sum_{q_n|q}\sum_{\bfa\in U_\bfq}q^{-n}\sum_{s\in Z_q^n}e\left(\p(s)a/q+s\cdot \bfa/\bfq)g(\bfa/\bfq\right)\]
\[
=\sum_{q_1|q}...\sum_{q_n|q}\sum_{\bfa\in U_\bfq}F(a,q,\bfa,\bfq)g(\bfa/\bfq),\]
 noting that $lcm(q,q_1,...,q_n)$ is always $q$ here.

Now we continue with the proof of Lemma \ref{lemma2.1}. 

\begin{proof} (Lemma \ref{lemma2.1}) Fix $q$, $\bfq$, $\bfa\in U_\bfq$, and $a\in U_q$. Assume, without loss of generality, that $q_1$ does not divide $q$. Then we can write $q=pd$ and $q_1=p_1d$ where the greatest common divisor of $p$ and $p_1$, denoted $(p,p_1)$, is at least $1$. 

Now \[
\sum_{s\in Z_Q^n}e(\p(s)a/q+s\cdot \bfa/\bfq)\]\[=\sum_{s_2,...,s_n\in Z_Q}\left(\sum_{s_1\in Z_Q}e(\p(s_1,...,s_n)a/q+s_1 a_1/q_1)\right)e(a_2 s_2/q_2+...+a_ns_n/q_n).\]
Let $Q_1$ be the least common multiple of $q$ and $q_1$, so that $Q_1=pp_1d$. The inner sum is a multiple of \[
\sum_{s_1\in Z_{Q_1}}e(\p(s_1,...,s_n)a/q+s_1 a_1/q_1)\]
as $Q_1|Q$ and the phase is periodic modulo $Q_1$. The sum over $s_1\in Z_{Q_1}$ can be written as a sum of $r+qt$ over $r\in Z_q$ and $t\in Z_{p_1}$, giving \[
\sum_{s_1\in Z_{Q_1}}e(\p(s_1,...,s_n)a/q+s_1 a_1/q_1)\]\[
=\sum_{r\in Z_q}\sum_{t\in Z_{p_1}}e(\p(r+qt,...,s_n)a/q+(r+q_1t) a_1/q_1)\]\[
=\sum_{r\in Z_q}e(\p(r+qt,s_2,...,s_n)a/q)e(ra_1/q_1)\left(\sum_{t\in Z_{p_1}}e(qt a_1/q_1)\right).\]
The result follows as \[
\sum_{t\in Z_{p_1}}e(qt a_1/q_1)=\sum_{t\in Z_{p_1}}e(pt a_1/p_1)=0\]
as $(a_1p,p_1)=1$.
\end{proof}

The other result we are interested is an application of Lemma \ref{lemma2.2} which concerns the number of solutions to the  equation $\p(x)=0$ over the cyclic groups $Z_q$.  

\begin{lemma}\label{lemma4.2}
Let $q\geq1$ be an integer and $\mathcal{B}(\p)>(d-1)2^d$.  Then \[
q^{1-n}\sum_{s\in Z^n_{q}}\1_{\p(s)=0 \, (q)}\<1.\]
\end{lemma}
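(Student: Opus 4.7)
The plan is to detect the condition $\p(s) = 0 \pmod{q}$ by orthogonality and then reduce the resulting character sum to the familiar quantities $F_q(a,\bfa)$ controlled by Lemma \ref{lemma2.2}. Writing $\1_{\p(s) = 0\,(q)} = q^{-1}\sum_{a\in Z_q} e(\p(s)a/q)$ and exchanging the order of summation, one has
\[
q^{1-n}\sum_{s\in Z_q^n}\1_{\p(s)=0\,(q)} = q^{-n}\sum_{a\in Z_q}\sum_{s\in Z_q^n}e(\p(s)a/q).
\]
The first step is to regroup the outer sum by the denominator of $a/q$ in lowest terms, i.e., by writing $\sum_{a\in Z_q} = \sum_{d\mid q}\sum_{a'\in U_d}$ with $a = a'(q/d)$. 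This is the same observation exploited for Lemma \ref{lemma2.3}.

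Next I will collapse the inner sum to an $F_d$-type expression. Writing $q = de$, the phase $e(\p(s)a/q) = e(\p(s)a'/d)$ depends on $s$ only modulo $d$ because $\p$ has integer coefficients, so splitting $s = r + dt$ with $r\in Z_d^n$ and $t\in Z_e^n$ gives
\[
q^{-n}\sum_{s\in Z_q^n} e(\p(s)a/q) = (de)^{-n}\cdot e^n\sum_{r\in Z_d^n}e(\p(r)a'/d) = F_d(a',0).
\]
Substituting back yields the clean identity
\[
q^{1-n}\sum_{s\in Z_q^n}\1_{\p(s)=0\,(q)} = \sum_{d\mid q}\sum_{a'\in U_d}F_d(a',0).
\]

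The final step is to apply Lemma \ref{lemma2.2}, which gives $|F_d(a',0)| = O(d^{-c})$ with $c = \mathcal{B}(\p)(d-1)^{-1}2^{1-d}$. Under the hypothesis $\mathcal{B}(\p)>(d-1)2^d$, one has $c > 2$ (this is precisely the strictness observed after Lemma \ref{lemma2.5}). Bounding $|U_d|\leq d$ gives
\[
\sum_{d\mid q}\sum_{a'\in U_d}|F_d(a',0)| \< \sum_{d\mid q} d^{1-c} \leq \sum_{d=1}^{\infty}d^{1-c},
\]
which converges uniformly in $q$ since $c>2$. This yields the desired estimate.

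The only step with any subtlety is the reduction of $q^{-n}\sum_{s\in Z_q^n}e(\p(s)a/q)$ to $F_d(a',0)$ when $a$ is not coprime to $q$; once the periodicity of $\p$ modulo $d$ is exploited this is routine, but it must be executed with care because the outer factor $q^{-n}$ and the inner sum are over different moduli. Aside from that, the argument is a direct consequence of Lemma \ref{lemma2.2} and the large rank hypothesis.
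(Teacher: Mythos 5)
Your proof is correct, and it organizes the computation differently from the paper. Both arguments start from the same orthogonality identity and ultimately rest on Lemma \ref{lemma2.2} together with the strict inequality $c>2$, but the paper first treats prime powers $q=p^t$ (writing $a=p^ra'$ and reducing $W_{a,p^t}$ to $W_{a',p^{t-r}}$), and then passes to general $q$ by invoking the Chinese-remainder multiplicativity of the solution count, ending with the convergent Euler product $\prod_p(1+O(p^{1-c}))$. You instead carry out the reduction globally: grouping $a\in Z_q$ by the reduced denominator $d\mid q$ and using periodicity of $\p$ modulo $d$, you get the clean identity $q^{1-n}\sum_{s}\1_{\p(s)=0\,(q)}=\sum_{d\mid q}\sum_{a'\in U_d}F_d(a',0)$, and then $\sum_{d\mid q}d^{1-c}\leq\sum_{d\geq1}d^{1-c}<\infty$. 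What your route buys is the elimination of the prime-power case analysis and of the multiplicativity step entirely, at the cost of nothing substantive; it is also robust to the $\eps$-loss version of the exponential sum bound that the paper actually uses in its proof, since $c-\eps>2$ for small $\eps$. What the paper's route buys is the slightly sharper multiplicative form $\prod_{p\mid q}(1+O(p^{1-c}))$ of the local densities, which is more information than the lemma requires. One cosmetic point: you use $d$ both for the divisor variable and (implicitly, inside $c=\mathcal{B}(\p)(d-1)^{-1}2^{1-d}$) for the degree of $\p$; rename the divisor variable to avoid the clash if this is written up.
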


\begin{proof}
Define\[
W_{a,q}=\sum_{s\in Z_q^n}e(\p(s)a/q)\]
so that \eq\label{4.3}
\sum_{s\in Z^n_{q}}\1_{\p(s)=0 \, (q)}=q^{-1}\sum_{a\in Z_q}W_{a,q}\ee

For general $q$ 
we have the estimate\[
|W_{a,q}|\leq q^{n-c+\eps}\]
for some $c>2$ whenever $a\in U_q$, which is Lemma \ref{lemma2.2} when $q_1=...=q_n=1$.

 Consider first   $q$ of the form $p^t$ for some prime $p$.  For each $a\in Z_q$ we can write $a=p^ra'$ where $a'\in U_{p^{t-r}}$ and $r\in\{0,1,...,t\}$, the case $r=t$ corresponding to $a=0$.  Then we have \[
W_{a,q}=\sum_{s\in Z^n_{p^t}}e(\p(s)a/p^t)=\sum_{s\in Z^n_{p^t}}e(\p(s)a'/p^{t-r})=p^{nt}/p^{n(t-r)}W_{a',p^{t-r}}\]
thus giving a bound \[
|W_{a,q}|=O(p^{rn}p^{n(t-r)-c(t-r)+\eps})=O(p^{nt-c(t-r)+\eps}).\]
 The collection of all such $a$ where $r$ is fixed is naturally equivalent to $U_{p^{t-r}}$. Then the contribution from these terms to the sum in \eqref{4.3} is  \[
\sum_{a'\in U_{p^{t-r}}}W_{a',p^{t-r}}\<p^{t-r} p^{nt-c(t-r)+\eps}=p^{nt-(1-c)(t-r)+\eps}.\]
Summing over all $r=0,...,t$ then gives\[
q^{-1}\sum_{a\in Z_q}W_{a,q}=O(q^{n-1}(1+p^{(1-c)+\eps}+p^{2(1-c)+\eps}+...+p^{t(1-c)+\eps}))\]
where it is clear that the $p^\eps$ is not necessary when $r=t$ as this corresponds to the single element $a=0\in Z_q$.

With the assumption that $c>2$ we  have that 
\[
p^{(1-c)+\eps}+p^{2(1-c)+\eps}+...+p^{t(1-c)+\eps}\leq p^{(1-c)}(1+2^{(1-c)}+2^{2(1-c)}+...)\<p^{1-c}.\]
Then \[
q^{-1}\sum_{a\in Z_q}W_{a,q}=q^{n-1}(1+O(p^{1-c}))\]
for $q$ a prime power.

For general composite $q$ we write $q=p_1^{t_1}...p_m^{t_m}$ and use the well known fact that \[
|\{s\in Z^n_q:\p(s)\equiv 0 \,(q)\}|=\prod_{i=1}^m|\{s\in Z^n_{p_i^{t_i}}:\p(s)\equiv 0 \,(p_i^{t_i})\}|\]
to get the bound \[
q^{1-n}\sum_{s\in Z^n_{q}}\1_{\p(s)=0 \, (q)}=\prod_{p|q}(1+O(p^{(1-c)+\eps})).\]
This  is  $O(1)$ as   \[
\prod_p(1+O(p^{1-c+\eps}))\]
is absolutely convergent when $c>2$. 
\end{proof}

\vspace{.2in}
%%%%%%%%%%%%%%%%%%%%%%%%%%
\section{On the approximation}

 Our goal in this section is to deduce Lemmas \ref{lemma2.6} and \ref{lemma2.115} . We handle the former first, which is an application of both Lemma \ref{lemma2.4} and Lemma \ref{lemma2.5}. 
 
 \begin{proof}(Lemma \ref{lemma2.6}) Let $l\geq0$ be a given integer and consider \[
\sum_{a\in U_q}\sum_{\bfa\in Z_q^n}F_q(a,\bfa)\left(\zeta(q(\xi-\bfa/q)-\zeta(10^l(\xi-\bfa/q))\right)\ds(N(\xi-\bfa/q))\]
for some $q\in I_l$. For each $\xi$ there is at most one $\bfa$ for which $\zeta(q(\xi-\bfa/q)-\zeta(10^l(\xi-\bfa/q))$ is nonzero, and on the support of these terms we have that $|\xi-\bfa/q|\geq 10^{-l}.$ From Lemma \ref{lemma2.5} we have the estimate \[
|\ds(N(\xi-\bfa/q))|\< (N/10^l)^{-c}\]
when $l$ is small in terms of $N$. More precisely, if  $l<\delta \log\,N$  we  have an estimate of the form\[
|\ds(N(\xi-\bfa/q))|\< N^{-\delta}\]
In turn 
\[
|\sum_{a\in U_q}\sum_{\bfa\in Z_q^n}F_q(a,\bfa)\left(\zeta(q(\xi-\bfa/q)-\zeta(10^l(\xi-\bfa/q))\right)\ds(N(\xi-\bfa/q))|\< q^{1-c}N^{-\delta}\]
uniformly in $\xi$, where we have applied  Lemma \ref{lemma2.2}. This is summable in  $q$ since $c>2$. Hence one gets  the bound \[
\sum_{l<\delta\log\,N}\sum_{q\in I_l}\sum_{a\in U_q}\sum_{\bfa\in Z_q^n}F_q(a,\bfa)\left(\zeta(q(\xi-\bfa/q)-\zeta(10^l(\xi-\bfa/q))\right)\ds(N(\xi-\bfa/q))=O(N^{-\delta}).\]

It remains to consider 
\[
\sum_{l\geq\delta\log\,N}\sum_{q\in I_l}\sum_{a\in U_q}\sum_{\bfa\in Z_q^n}F_q(a,\bfa)\left(\zeta(q(\xi-\bfa/q)-\zeta(10^l(\xi-\bfa/q))\right)\ds(N(\xi-\bfa/q)).\] 
For this we notice that \[
F_q(a,\bfa)\left(\zeta(q(\xi-\bfa/q)-\zeta(10^l(\xi-\bfa/q))\right)\ds(N(\xi-\bfa/q))\< q^{-c}\]
uniformly in $\bfa\in Z_q^n$ and $\xi\in\Pi^n$ when $q\in I_l$. Then we have the bound \[
\sum_{l\geq\delta\log\,N}\sum_{q\in I_l}\sum_{a\in U_q}q^{-c}\leq\sum_{q\geq 2^{\delta \log\,N}}q^{1-c}=O(N^{-\delta})\]
as  $c>2$ by assumption.
\end{proof}

\begin{proof} (Lemma \ref{lemma2.115})
Fix $j\geq1$ and consider\[
||\sup_{k\geq 4^{j-1}}|\mathscr{F}^{-1}(M_{2^k,j}\widehat{f})|\,||_{\ell^2}\]
for a given function $f\in\ell^2$. This is at most\eq\label{5.2}
\sum_{q\in I_j}\sum_{a\in U_q}\|\sup_{k\geq 4^{j-1}}\left|\mathscr{F}^{-1}\left(\sum_{\bfa\in Z_q^n}F_q(a,\bfa)\zeta(10^j(\cdot-\bfa/\bfq))\dsk(\cdot-\bfa/\bfq)\widehat{f}\right)\right|\,\|_{\ell^2}\ee

We use that $\zeta(10^j(\xi-\bfa/\bfq))\zeta((10^j/2)(\xi-\bfa/\bfq))=\zeta(10^j(\xi-\bfa/\bfq))$ to write\[
\sum_{\bfa\in Z_q^n}F_q(a,\bfa)\zeta(10^j(\xi-\bfa/\bfq))\dsk(\xi-\bfa/\bfq)\]\[
=\left(\sum_{a\in Z_q}\sum_{\bfa\in Z_q^n}F_q(a,\bfa)\zeta(10^j(\xi-\bfa/q))\right) \left(\sum_{\bfa\in Z_q^n}\zeta((10^j/2)(\xi-\bfa/q))\ds_k(\xi-\bfa/q)\right).\]
Because the the support of $\zeta((10^j/2)\xi)$ is contained in the cube $[-2/(5\cdot10^j),2/(5\cdot10^j)]^n$, which is in turn contained in $[-1/q,1/q]^n$ for all $q\in I_j$, we can apply (\cite{MSW}, corollary 2.1) with our continuous maximal function inequality (Lemma \ref{lemma2.7}) to get that \eqref{5.2} is bounded by \[
\sum_{q\in I_j}\sum_{a\in U_q}\|\mathscr{F}^{-1}\left(\sum_{\bfa\in Z_q^n}F_q(a,\bfa)\zeta(10^j(\cdot-\bfa/\bfq))\widehat{f}\right)\|_{\ell^2},\]
 noting that the result of \cite{MSW} has an implied constant independent of $q$. In turn this is at most \[
\sum_{q\in I_j}\sum_{a\in U_q}\left(\int_{\Pi^n}\left|\sum_{\bfa\in Z_q^n}F_q(a,\bfa)\zeta(10^j(\xi-\bfa/\bfq))\widehat{f}(\xi)\right|^2\,d\xi\right)^{1/2}\]\[
\leq\sum_{q\in I_j}\sum_{a\in U_q}\sup_{\bfa\in Z_q^n}|F_q(a,\bfa)| \,||f||_{\ell^2}.\]
The proof is completed by an application of Lemma \ref{lemma2.2}, as we have \[
\sum_{q\in I_{j}}q^{1-c}=\sum_{q=2^{j}}^{2^{j+1}}q^{1-c}=O(2^{-\delta j})\]
because of the assumption $c>2$.
\end{proof}

\vspace{.2in}
%%%%%%%%%%%%%%%%%%%%%%%%%%
\section{An $\ell^p$ inequality}

This section is devoted to the proof of Lemma \ref{lemma2.8}. The argument is another reduction to Lemma \ref{lemma2.7} following the method of \cite{MSW}. The main ingredient that we need is a result about $\ell^p\to\ell^p$ estimates involving the multipliers \[
W_q(\xi)=\sum_{a\in Z_q}\sum_{\bfa\in Z_q^n} F_q(a,\bfa)\zeta(q^2(\xi-\bfa/q)) \]
These estimates are achieved by an interpolation argument involving a result at $\ell^1$ and a result at  $\ell^2$. At $\ell^1$ we have the following lemma.
\begin{lemma}\label{lemma7.1}
We have
\[
||\mathscr{F}^{-1}(\widehat{f}W_{q})||_{\ell^1}=O(||f||_{\ell^1})
\]
uniformly in $q$.
\end{lemma}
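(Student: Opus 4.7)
The plan is to reduce the inequality to a uniform bound $\|K_q\|_{\ell^1} = O(1)$ on the convolution kernel $K_q = \mathscr{F}^{-1}(W_q)$. Indeed $\mathscr{F}^{-1}(\widehat{f}W_q) = f * K_q$, so Young's convolution inequality reduces the desired estimate to a $q$-uniform bound on $\|K_q\|_{\ell^1}$.

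The first step is an explicit computation of $K_q$. For each $\bfa \in Z_q^n$, the bump $\zeta(q^2(\xi - \bfa/q))$ is supported in a cube of side $O(q^{-2})$ about $\bfa/q \in \Pi^n$, so in particular the supports do not wrap around the torus, and the torus inverse Fourier transform of each bump coincides with its Euclidean counterpart, namely $q^{-2n}\widetilde{\zeta}(x/q^2)\,e(-x\cdot\bfa/q)$, where $\widetilde{\zeta}$ denotes the Schwartz Fourier transform of $\zeta$ in the sense of section 2.3. Substituting the definition of $F_q(a,\bfa)$ and then using the orthogonality relation $\sum_{\bfa \in Z_q^n} e((s-x)\cdot \bfa/q) = q^n \mathbf{1}_{s \equiv x\,(q)}$, the inner $\bfa$-sum collapses to $e(\p(x)a/q)$, after which $\sum_{a\in Z_q} e(\p(x)a/q) = q\,\mathbf{1}_{\p(x) \equiv 0\,(q)}$. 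This yields the clean identity
\[
K_q(x) = q^{1-2n}\,\widetilde{\zeta}(x/q^2)\,\mathbf{1}_{\p(x) \equiv 0\,(q)}.
\]

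The second step is to bound $\|K_q\|_{\ell^1}$ using the Schwartz decay $|\widetilde{\zeta}(y)| \< (1+|y|)^{-N}$ for $N$ arbitrarily large. I would partition $\Z^n$ into translates $qm + [0,q)^n$ for $m \in \Z^n$; since the condition $\p(x) \equiv 0\,(q)$ depends only on $x$ modulo $q$, Lemma \ref{lemma4.2} gives at most $O(q^{n-1})$ solutions in each such cube. Combined with the estimate $|\widetilde{\zeta}(x/q^2)| \< (1 + |m|/q)^{-N}$ valid uniformly for $x$ in the $m$-th cube, this gives
\[
\|K_q\|_{\ell^1} \< q^{1-2n} \cdot q^{n-1} \sum_{m\in \Z^n}(1+|m|/q)^{-N},
\]
and splitting the $m$-sum into $|m| \le q$ and $|m| > q$ (or comparing to a Riemann sum for $\int_{\R^n}(1+|y|/q)^{-N}dy$) shows it is of size $O(q^n)$, which together with the prefactor gives the desired $O(1)$ bound.

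I do not anticipate a serious obstacle: the only nontrivial input is the mod-$q$ counting bound of Lemma \ref{lemma4.2}, and the rest is direct Fourier inversion combined with Schwartz decay. The only bookkeeping care needed is verifying the disjointness/non-wrapping of the supports $\{\zeta(q^2(\xi - \bfa/q))\}_{\bfa \in Z_q^n}$, which is immediate since each support has diameter $O(q^{-2})$, much smaller than the spacing $1/q$ of the lattice of centers $\{\bfa/q\}$.
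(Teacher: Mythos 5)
Your proof is correct, and it uses the same essential ingredients as the paper's: the orthogonality collapse of $\sum_{a\in Z_q}\sum_{\bfa\in Z_q^n}F_q(a,\bfa)e(-x\cdot\bfa/q)$ to $q\,\1_{\p(x)\equiv 0\,(q)}$, the mod-$q$ solution count of Lemma \ref{lemma4.2}, and the rapid decay of the Euclidean Fourier transform of $\zeta$. The packaging, however, is genuinely different and arguably cleaner: you compute the convolution kernel in closed form, $K_q(x)=q^{1-2n}\,\widetilde{\zeta}(x/q^2)\,\1_{\p(x)\equiv 0\,(q)}$ (up to a harmless reflection in the argument of $\widetilde{\zeta}$ coming from the sign convention of Section 2.3), and conclude by Young's inequality after summing over the cubes $qm+[0,q)^n$, each containing exactly one representative of every residue class and hence $O(q^{n-1})$ zeros of $\p$ mod $q$; the bookkeeping $q^{1-2n}\cdot q^{n-1}\cdot O(q^n)=O(1)$ is then transparent. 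The paper never writes the kernel explicitly: it first reduces to $f$ supported on a single residue class $q\Z^n+t$ (so the phase $e(t\cdot\bfa/q)$ can be factored out), expresses the operator as a character sum $\sum_{a}\sum_{\bfa}F_q(a,\bfa)e((t-y)\cdot\bfa/q)$ multiplying $f*\mathscr{F}^{-1}(\zeta)$, and then splits the $\ell^1$ norm over classes $y\equiv r\,(q)$, bounding the lattice sums of $\mathscr{F}^{-1}(\zeta)$ by $\|f\|_{\ell^1}q^{-n}$ and the averaged character sum via the same orthogonality and Lemma \ref{lemma4.2}. Your kernel-side route avoids the residue-class decomposition of $f$ entirely; it yields nothing extra quantitatively, but it is the more direct path to the same uniform bound, and like the paper's proof it ultimately relies on $c>2$ in Lemma \ref{lemma4.2}, i.e. on the regularity hypothesis implicit in the statement.
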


The  $\ell^2$ estimate is stated next. 
\begin{lemma}\label{lemma7.2}
Uniformly in $q$ we have 
\eq\label{7.2}
||\mathscr{F}^{-1}(\widehat{f}W_{q})||_{\ell^2}=O(||f||_{\ell^2}).
\ee
\end{lemma}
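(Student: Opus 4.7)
By Plancherel's theorem on $\Pi^n$, the multiplier inequality \eqref{7.2} is equivalent to the uniform pointwise bound $\|W_q\|_{L^\infty(\Pi^n)}=O(1)$, so the plan is to establish this $L^\infty$ bound directly.

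The key structural observation is that for $q\geq 1$ the bump $\zeta(q^2(\xi-\bfa/q))$ is supported in a cube of side length $2/(5q^2)$ centered at $\bfa/q$, while the points $\bfa/q$ for $\bfa\in Z_q^n$ form a $(1/q)$-spaced lattice in $\Pi^n$. Since $2/(5q^2)<1/(2q)$ for every $q\geq 1$, these supports are pairwise disjoint. Consequently, for any fixed $\xi\in\Pi^n$ at most one value of $\bfa$ contributes to the inner sum defining $W_q$, and we obtain
\[
|W_q(\xi)|\leq \|\zeta\|_\infty\cdot\sup_{\bfa\in Z_q^n}\left|\sum_{a\in Z_q}F_q(a,\bfa)\right|.
\]

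Next I would evaluate the $a$-sum by swapping orders of summation and using the orthogonality of additive characters of $Z_q$. Writing out $F_q(a,\bfa)$ and summing the geometric series in $a$ first yields
\[
\sum_{a\in Z_q}F_q(a,\bfa)=q^{-n}\sum_{s\in Z_q^n}e(s\cdot\bfa/q)\sum_{a\in Z_q}e(\p(s)a/q)=q^{1-n}\sum_{\substack{s\in Z_q^n\\ \p(s)\equiv 0\,(q)}}e(s\cdot\bfa/q).
\]
Bounding the character sum trivially by the number of terms and invoking Lemma \ref{lemma4.2} (which applies since $\p$ is regular and hence $\mathcal{B}(\p)>(d-1)2^d$) gives
\[
\left|\sum_{a\in Z_q}F_q(a,\bfa)\right|\leq q^{1-n}\left|\{s\in Z_q^n:\p(s)\equiv 0\,(q)\}\right|\lesssim 1,
\]
uniformly in $q$ and $\bfa$. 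Combining with the display above yields $\|W_q\|_\infty\lesssim 1$, which completes the proof via Plancherel.

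I do not anticipate a genuine obstacle here: the two ingredients (disjointness of the bumps forcing a supremum rather than a sum, and the uniform bound on the number of mod-$q$ zeros from Lemma \ref{lemma4.2}) are independently available. The only point that warrants care is verifying that the rescaling factor $q^2$ in $\zeta(q^2\cdot)$ is sharp enough to guarantee disjoint supports for every $q\geq 1$; this is a one-line numerical check.
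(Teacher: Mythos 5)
Your proof is correct, but the key input differs from the paper's. The paper also reduces via Plancherel and the disjointness of the supports of the $\zeta(q^2(\cdot-\bfa/q))$ (it cites the mechanism of Lemma \ref{lemma2.115}), but then it applies the triangle inequality in $a$ and invokes Birch's pointwise bound $|F_q(a,\bfa)|\lesssim q^{-c}$ from Lemma \ref{lemma2.2}, obtaining $\sum_{a\in U_q}\sup_{\bfa}|F_q(a,\bfa)|\lesssim q^{1-c}=O(1)$ since $c>2$. You instead perform the $a$-sum exactly by orthogonality, reducing to $q^{1-n}\#\{s\in Z_q^n:\p(s)\equiv 0\ (q)\}$ and invoking Lemma \ref{lemma4.2}; this is essentially the arithmetic engine of the paper's $\ell^1$ estimate (Lemma \ref{lemma7.1}) transplanted to the $L^\infty$ bound of the multiplier. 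Your route has a genuine advantage: $W_q$ is defined with $a$ ranging over all of $Z_q$, while Lemma \ref{lemma2.2} is stated only for $a\in U_q$, so the paper's one-line proof silently ignores the non-primitive fractions (note, e.g., that the $a=0$, $\bfa=\0$ term contributes $\zeta(q^2\xi)$, so no decay in $q$ is possible and the stated bound $q^{1-c}$ can only refer to the $U_q$ portion); your exact summation handles all residues $a$ at once and delivers the $O(1)$ bound that the lemma actually asserts. What the paper's approach buys in exchange is decay in $q$ for the primitive part, which is needed elsewhere (Lemmas \ref{lemma2.115} and \ref{lemma2.11}) but not here. Your numerical check of disjointness ($2/(5q^2)$ versus the $1/q$ spacing of the points $\bfa/q$) is the same as the paper's implicit one and is fine for all $q\geq 1$.
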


The reduction to these estimates to the result of \cite{MSW} is essentially the same as in the previous section. Write \[
\Omega_{N,q}(\xi)=\sum_{a\in Z_q}\sum_{\bfa\in Z_q^n}F_q(a,\bfa)\zeta(q^2(\xi-\bfa/q))\ds(N(\xi-\bfa/q))\]
as \[
\left(\sum_{a\in Z_q}\sum_{\bfa\in Z_q^n}F_q(a,\bfa)\zeta(q^2(\xi-\bfa/q))\right) \left(\sum_{\bfa\in Z_q^n}\zeta(q(\xi-\bfa/q))\ds(N(\xi-\bfa/q))\right).\]
This assumes that $q\geq2$, as then $1/(5q^2)\leq 1/(10q)$ so that $\zeta(q^2(\xi-\bfa/q)\zeta(q(\xi-\bfa/q))=\zeta(q^2(\xi-\bfa/q))$. When $q=1$ we are simply dealing with \[
\Omega_{N,1}(\xi)=\zeta(\xi)\ds(N\xi)\]
and the result follows from the above mentioned result of \cite{MSW} and Lemma \ref{lemma2.7}. Then the estimate \[
||\sup_{N\geq1}|\mathscr{F}^{-1}(\Omega_{N,q}\widehat{f})|\,||_{\ell^p}\<||f||_{\ell^p}\]
is reduced to showing that  \[
||\mathscr{F}^{-1}(\widehat{f}W_{q})||_{\ell^p}=O(||f||_{\ell^p}),\]
provided of course that $p>1$ so that Lemma \ref{lemma2.7} applies. The latter estimate holds by interpolating Lemmas \ref{lemma7.1} and \ref{lemma7.2}. 

The proof of  Lemma \ref{lemma2.8}  is concluded once we have established Lemma \ref{lemma7.1} and Lemma \ref{lemma7.2}.

\begin{proof} (Lemma \ref{lemma7.1}) 
Take $f\in L^1(\Z^n)$, and we can assume that $f$ is supported on $q\Z^n+t$ for some $t\in Z^n_q$ (identifying $Z_q$ with $\{0,1,...,q-1\}$) by noticing  \[
||\mathscr{F}^{-1}\left((\sum_{t\in Z_q^n}\widehat{f_t})W_{q}\right)||_{\ell^1}\leq \sum_{t\in Z_t^n}||\mathscr{F}^{-1}(\widehat{f_t}W_{q})||_{\ell^1}\]
and \[
\sum_{t\in Z_q}||f_t||_{\ell^1}=||f||_{\ell^1}\]
where $f_t$ denotes the restriction of $f$ to the set $q\Z^n+t$. 

 Consider \[
\int_{\Pi^n}\widehat{f}(\xi)\left(\sum_{a\in Z_q}\sum_{\bfa\in Z^n_q}\zeta(\xi-\bfa/q)F_q(a,\bfa)\right)e(-y\cdot\xi)d\xi\]\[
=\sum_{a\in Z_q}\sum_{\bfa\in Z^n_q}F_q(a,\bfa)e(-y\cdot\bfa/q)\int_{\Pi^n}\widehat{f}(\xi+\bfa/q)\zeta(\xi)e(-y\cdot\xi)d\xi\]
Expand out $\widehat{f}$ by its Fourier series to get \[
\sum_{a\in Z_q}\sum_{\bfa\in Z^n_q}F_q(a,\bfa)e(-y\cdot\bfa/q)e(t\cdot\bfa/q)\sum_{l\in\Z^n}f(l)\int_{\Pi^n}\zeta(\xi)e((l-y)\cdot\xi)d\xi\]\[
=\sum_{a\in Z_q}\sum_{\bfa\in Z^n_q}F_q(a,\bfa)e((t-y)\cdot\bfa/q)\left(f*\mathscr{F}^{-1}(\zeta)\right)(y)\]
where we have used our assumption that the support of $f$ is in the set $q\Z^n+t$. Now take the $\ell^1$ norm of this expression and split the resulting sum into residue classes modulo $q$: \[
\sum_{y\in\Z^n}\left|\sum_{a\in Z_q}\sum_{\bfa\in Z^n_q}F_q(a,\bfa)e((t-y)\cdot\bfa/q)\left(f*\mathscr{F}^{-1}(\zeta)\right)(y)\right|\]\[
=\sum_{r\in Z^n_q}\sum_{z\in\Z^n}\left|\sum_{a\in Z_q}\sum_{\bfa\in Z^n_q}F_q(a,\bfa)e((t-(q z+r))\cdot\bfa/q)\left(f*\mathscr{F}^{-1}(\zeta)\right)(qz+r)\right|\]\[
=\sum_{r\in Z^n_q}\left(\sum_{z\in\Z^n}\left| \left(f*\mathscr{F}^{-1}(\zeta)\right)(qz+r)\right|\right)\left( \left|\sum_{a\in Z_q}\sum_{\bfa\in Z^n_q}F_q(a,\bfa)e((t-r)\cdot\bfa/q)\right|\right).\]

For a fixed $r$ the sum \[
\sum_{z\in\Z^n} \left|\left(f*\mathscr{F}^{-1}(\zeta)\right)(qz+r)\right|\]
is at most \eq\label{6.66}\sum_{z\in\Z^n} \sum_{l\in\Z^n}|f(l)|\,|\mathscr{F}^{-1}(\zeta)((q z+r)-l)|=
\sum_{l\in\Z^n}|f(l)|\,\left(\sum_{z\in\Z^n}|\mathscr{F}^{-1}(\zeta)((q z+r)-l)|\right).\ee
Notice that the sum \[
\sum_{z\in\Z^n}|\mathscr{F}^{-1}(\zeta)((q z+r)-l)|\]
is simply the $L^1$ norm of $\mathscr{F}^{-1}(\zeta)$ restricted to a residue class of $Z_q^n$, and in particular  is periodic in $l$ with respect to elements of $q\Z^n$. For each $l$ on the right hand side of \eqref{6.66} we  now have the bound \[
|f(l)|\sup_{t\in Z^n_q} \sum_{z\in\Z^n}|\mathscr{F}^{-1}(\zeta)(q z+t)|.\]
This is bounded by a constant multiple of $|f(l)|/{q^n}$ due to the assumption that $\zeta$ is smooth. Thus we have \[
\sum_{z\in\Z^n} \left|\left(f*\mathscr{F}^{-1}(\zeta)\right)(qz+r)\right|\<\frac{||f||_{L^1(\Z^n)}}{q^n} \]
uniformly in $r$.

Now we need to consider \[
\,\frac{1}{q^n} 
\sum_{r\in Z^n_q}\left|\sum_{a\in Z_q} \sum_{\bfa\in Z^n_q}F_q(a,\bfa)e((t-r)\cdot\bfa/q)\right|.\]
Proceed by expanding $F$ by its definition to get \[
\frac{1}{q^n} 
\sum_{r\in Z^n_q}\left| \frac{1}{q^n}\sum_{a\in Z_q}\sum_{\bfa\in Z^n_q}\sum_{s\in Z_q^n}e(\p(s)a/q+s\cdot \bfa/q)e((t-r)\cdot\bfa/q)\right|.\]
We can sum now in $a$ and $\bfa$ to arrive at \[
\frac{1}{q^n} 
\sum_{r\in Z^n_q}\left| \frac{q^{n+1}}{q^n}\sum_{s\in Z_q^n} \1_{\p(s)\equiv 0\,(q)} \1_{s\equiv (r-t)\,(q)}\right|\]
and hence we have the bound \[
q^{1-n}\sum_{s\in Z_q^n} \1_{\p(s)\equiv 0\,(q)} \]
by summing in $r$. This is $O(1)$ by Lemma \ref{lemma4.2}.
\end{proof}

\begin{proof} (Lemma \ref{lemma7.2})
By the disjointness of the supports of the terms involving the function  $\zeta$ it follows as in the proof of Lemma \ref{lemma2.115} that we have the bound \[
\sum_{a\in U_q}\sup_{\bfa\in Z_q^n}|F_q(a,\bfa)|\,||f||_{\ell^2}\]
for \eqref{7.2}, and Lemma \ref{lemma2.2} then gives the bound $q^{1-c}$. This is clearly $O(1)$ independent of $q$ under the assumption that $c>2$.
\end{proof}

\vspace{.2in}
%%%%%%%%%%%%%%%%%%%%%%%%%%
\section{$\ell^2$ estimates}

Here we prove the error term estimates in Lemma \ref{lemma2.10}, Lemma \ref{lemma2.11},  and Lemma \ref{lemma2.12}. The proofs are given in order. Again we use $\dsk(\xi)$ to mean $\ds(2^k\xi)$

\begin{proof} (Lemma \ref{lemma2.10})
Recall that \[
E^{(1)}_{2^k,j}(\xi)=\sum_{l=0}^{j-1}\sum_{q\in I_l}\sum_{a\in U_q}\sum_{\bfa\in Z^n_q}F_q(a,\bfa)\left(\zeta(Q_j^2(\xi-\bfa/q))-\zeta(10^l(\xi-\bfa/q)\right)\dsk((\xi-\bfa/q)).\]
With $f\in \ell^2$ we have that \[
||\sup_{N=2^k;\,k\geq4^{j-1}}|\mathscr{F}^{-1}(E^{(1)}_{N,j}\widehat{f})|\,||_{\ell^2}\]
is bounded by \[
\sum_{N=2^k;\,k\geq4^{j-1}}||\mathscr{F}^{-1}(E^{(1)}_{N,j}\widehat{f})|\,||_{\ell^2}\leq\sum_{N=2^k;\,k\geq4^{j-1}}||(E^{(1)}_{N,j}||_{L^\infty(\Pi^n)}||f||_{\ell^2}.\]
using Plancherel's Theorem. In turn this is at most \[
||f||_{\ell^2}\sum_{N=2^k;\,k\geq4^{j-1}}\sum_{l=0}^{j-1}\sum_{q\in I_l}\sum_{a\in U_q}\left(\sup_{\bfa\in Z^n_q}|F_q(a,\bfa)|\right)\]\[\times\,||\left(\zeta(Q_j^2(\xi-\bfa/q))-\zeta(10^l(\xi-\bfa/q))\right)\dsk(\xi-\bfa/q)||_{L^\infty(\Pi^n)}.\]

On the support of $\zeta(Q_j^2(\xi-\bfa/q))-\zeta(10^l(\xi-\bfa/q)$ we have that $\dsk$ is bounded above in terms of $Q_j^{2c}/2^{ck}$ by Lemma \ref{lemma2.5}.  Notice that trivially  $Q_j\leq 2^{j2^j}$. Then the sum in $k$ has summands which are crudely bounded as \[
\sum_{l=0}^{j-1}\sum_{q\in I_l}\sum_{a\in U_q}Q_j^{2c}/2^{ck}\leq 2^{2j}2^{cj2^{j+1}}2^{-ck}.\]
Summing in $k\geq 4^{j-1}$ gives a bound of $O(2^{-\delta j})$.\end{proof}

\begin{proof} (Lemma \ref{lemma2.11})
Write\[
E^{(2)}_{N,j}(\xi)=\sum_{q|Q_j;\,q\geq 2^j}\sum_{a\in U_q}\sum_{\bfa\in Z^n_q}F_q(a,\bfa)\zeta(Q_j^2(\xi-\bfa/q))\ds(N(\xi-\bfa/q))\]\[
=\sum_{q|Q_j;\,q\geq 2^j}\sum_{a\in U_q} \mathcal{T}_{a,q}(\xi).\]
Applying (\cite{MSW}, \S2, Corollary 2.1) again gives that \[
||\sup_{N=2^k;\,k\geq4^{j-1}}|\mathscr{F}^{-1}(\mathcal{T}_{a,q}\widehat{f})|\,||_{\ell^2}\]
is bounded by the $L^2(\Pi^n)$ norm of \[
\sum_{\bfa\in Z^n_q}F_q(a,\bfa)\zeta(Q_j^2(\xi-\bfa/q))\widehat{f}(\xi),\]
which is at most \[
\left(\sup_{\bfa\in Z^n_q}|F_q(a,\bfa)|\right)||f||_{\ell^2}\< q^{-c}||f||_{\ell^2}\]
by Lemma \ref{lemma2.2}. Finally we have \[
\sum_{q|Q_j;\,q\geq 2^j}\sum_{a\in U_q}q^{-c}=\sum_{q|Q_j;\,q\geq 2^j}q^{1-c}=O(2^{-\delta j})\]
when $c>2$.
\end{proof}

\begin{proof} (Lemma \ref{lemma2.12})
We have that \[
||\sup_{k\geq 4^{j-1}}|K_k*f-\mathscr{F}^{-1}(\sum_{l=0}^{j-1}M_{2^k,l}\widehat{f})|\,||_{\ell^2}\]
is bounded above by \[
||\sup_{k\geq 4^{j-1}}|K_k*f-\mathscr{F}^{-1}(\sum_{l=0}^{\infty}M_{2^k,l}\widehat{f})|\,||_{\ell^2}\]\[
+||\sup_{k\geq 4^{j-1}}|\mathscr{F}^{-1}(\sum_{l=j}^{\infty}M_{2^k,l}\widehat{f})|\,||_{\ell^2}\]
The first of these terms is bounded by \[
\sum_{k\geq 4^{j-1}}\sup_{\xi}|\widehat{\omega_{2^k}}(\xi)-\sum_{l=0}^{\infty}M_{2^k,l}(\xi)|\,||f||_{\ell^2}\]
and the desired bound follows from  Lemma \ref{lemma2.6}, as\[
\sum_{k\geq 4^{j-1}}2^{-\delta k}=O(2^{-\delta j}).\]
For the remaining term we have 
\[
||\sup_{k\geq 4^{j-1}}\mathscr{F}^{-1}(\sum_{l=j}^{\infty}M_{2^k,l}\widehat{f})|\,||_{\ell^2}\leq \sum_{l=j}^{\infty}||\sup_{k\geq 4^{j-1}}\mathscr{F}^{-1}(M_{2^k,l}\widehat{f})|\,||_{\ell^2}\]\[=
 \sum_{l=j}^\infty O(2^{-\delta l})=O(2^{-\delta j})\]
 by appealing to the method of Lemma \ref{lemma2.115}.
This completes the proof. 
\end{proof}

\vspace{.2in}
%%%%%%%%%%%%%%%%%%%%%%%%%%
\section{The partial maximal inequality}

The proof of Lemma \ref{lemma2.9} follows the outline given by Bourgain for proving partial maximal function estimates. The argument appears in (\cite{Bou}, section 7), and is also carried out in (\cite{Nair}, section 10) with more detail.  

Let $G=Z_J$ for some large integer $J$ and endow $G$ with the normalized counting measure. Identify $G$ with the set $\{0,1,...,J-1\}$. Because $\omega_N$ is a positive kernel, Lemma \ref{lemma2.9}  follows from the estimate 
\eq\label{8.1}
||\sup_{k_0\leq k\leq 2k_0}|f*K_k|\,||_{L^p(G)}\< \log(k_0) ||f||_{L^p(G)}.
\ee
The reason for the formulation with the compact group $G$ is to apply a result from \cite{Stein} which implies that \eqref{8.1} holds true if we can show the weaker inequality
\[
||\sup_{k_0\leq k\leq 2k_0}|f*K_k|\,||_{L^1(G)}\< \log(k_0) ||f||_{L^p(G)}.
\]
Written in the dual form this becomes 
\[
||\sum_{k=k_0}^ {2k_0}g_k*K_k||_{\ell^u}\< \log(k_0) 
\]
where $u=p/(p-1)$ and $g_k$ is any collection of nonnegative functions with $\sum_{k}g_k\leq 1.$ Also, as \eqref{8.1} weakens as $p$ increases, we can assume that $u$ is an integer. 

The argument of Bourgain for dealing with partial maximal functions mentioned above is a very general reduction by Fourier analytic methods to an $L^2$ result, and it is only in this result where we need to make modifications of that argument.  Set $L_k=K_{mk}$ for some $m=C_1 \log k_0$, $C_1$ to be chosen later.  Lemma \ref{lemma2.9} follows once we have established  our  last result, which in turn  completes the proof of Theorem \ref{thm11}. 

\begin{lemma}
Let $k_0\leq k_1<...<k_u<2k_0$. Then we have the estimate
\eq\label{8.4}
||[g_{k_2}*L_{k_2})...(g_{k_u}*L_{k_u})]*(L_{k_1}-L_{k_0})]||_{L^2(G)}\leq k_0^{-u}
\ee
for any functions $g_{k_2},...,g_{k_u}\geq0$ satisfying \[
\sum_i g_{k_i}\leq 1.\]
\end{lemma}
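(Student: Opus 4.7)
The plan is to apply Plancherel on $G$ to reduce estimate \eqref{8.4} to a pointwise bound on the Fourier multiplier
\[
\widehat{h}(\xi)=\Bigl(\prod_{i=2}^{u}\widehat{g}_{k_i}(\xi)\widehat{L}_{k_i}(\xi)\Bigr)\bigl(\widehat{L}_{k_1}(\xi)-\widehat{L}_{k_0}(\xi)\bigr),
\]
so that the left side of \eqref{8.4} is controlled by $\|\widehat{h}\|_{\ell^2(\widehat{G})}$. Since $g_{k_i}\geq 0$ and $\sum_i g_{k_i}\leq 1$, we have $|\widehat{g}_{k_i}(\xi)|\leq 1$ uniformly, so the $\widehat{g}_{k_i}$ factors can simply be discarded. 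The remaining task is to extract a factor of $k_0^{-u}$ from the product of $\widehat{L}_{k_i}$'s together with the difference $\widehat{L}_{k_1}-\widehat{L}_{k_0}$.

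The next step is the standard major arc / minor arc decomposition of the torus, using Lemma \ref{lemma2.4} and Lemma \ref{lemma2.6} applied at the scale $N=2^{mk}$ with $m=C_1\log k_0$. Fix a cutoff $Q$ polynomial in $k_0$. On the minor arcs (points at distance $\gtrsim Q^{-2}$ from every $\bfa/q$ with $q\leq Q$), Lemmas \ref{lemma2.4} and \ref{lemma2.5} give $|\widehat{L}_{k_i}(\xi)|\lesssim 2^{-\delta m k_0}$. Since there are $u-1$ such factors, and $2^{-\delta m k_0}=k_0^{-C_1\delta}$, choosing $C_1$ large compared to $u/\delta$ immediately forces the minor arc contribution to $\|\widehat{h}\|_{\ell^2}$ below $k_0^{-u}$.

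On a major arc centered at $\bfa/q$ with $q\leq Q$, Lemma \ref{lemma2.6} allows us to replace each $\widehat{L}_{k_i}(\xi)$ up to a negligible error by $F_q(a,\bfa)\zeta(q(\xi-\bfa/q))\widetilde{d\sigma}(2^{mk_i}(\xi-\bfa/q))$. The key cancellation is then that the corresponding approximation of $\widehat{L}_{k_1}-\widehat{L}_{k_0}$ becomes
\[
F_q(a,\bfa)\zeta(q(\xi-\bfa/q))\bigl[\widetilde{d\sigma}(2^{mk_1}(\xi-\bfa/q))-\widetilde{d\sigma}(2^{mk_0}(\xi-\bfa/q))\bigr],
\]
whose $L^2$ norm on the arc is small: after the change of variables $\eta=2^{mk_0}(\xi-\bfa/q)$, the difference of profiles is pointwise bounded by $\min(|\eta|,1)(1+|\eta|)^{-c}$ (combining the local smoothness of $\widetilde{d\sigma}$ near $0$ with its tail decay from Lemma \ref{lemma2.5}), and integration in $\eta$ produces a factor of $2^{-n m k_0/2}$ dressing. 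Combining with the bound $|F_q(a,\bfa)|^{u-1}\lesssim q^{-c(u-1)}$ on the remaining $\widehat{L}_{k_i}$ factors, summing over $\bfa\in Z_q^n$ (which contributes at most $q^n$), and summing over $q\leq Q$, one gets a total major arc contribution of the form $Q^{O(1)}\cdot 2^{-\delta m k_0}$; choosing $C_1$ large enough relative to $u$ and the polynomial growth in $Q$ pushes this beneath $k_0^{-u}$.

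The principal obstacle is the major arc step, specifically quantifying the $L^2$ cancellation in $\widetilde{d\sigma}(2^{mk_1}\cdot)-\widetilde{d\sigma}(2^{mk_0}\cdot)$ uniformly in $k_0\leq k_0'<k_1<2k_0$. The resolution relies on splitting the arc dyadically in $|\xi-\bfa/q|$ and using the two regimes of $\widetilde{d\sigma}$: near $0$ the gradient bound produces a factor of order $2^{mk_1}|\xi-\bfa/q|$ (which is small close to $\bfa/q$), while at large argument the individual decay of each term supplies the savings. The arithmetic of $c>2$ from Lemma \ref{lemma2.5} ensures the sums over $q$ and $\bfa$ converge, and the freedom to enlarge $C_1$ depending on $u$ (but not on $k_0$) lets us absorb every polynomial loss into the exponential gain $2^{-\delta m k_0}=k_0^{-C_1\delta}$.
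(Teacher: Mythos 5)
There is a decisive gap at the very first step. The bracket in \eqref{8.4} is a \emph{pointwise product} of the functions $g_{k_i}*L_{k_i}$ on $G$, not a convolution of them, so its Fourier transform is the $(u-1)$-fold \emph{convolution} of the multipliers $\widehat{g}_{k_i}\widehat{L}_{k_i}$, not their product. Your identity $\widehat{h}(\xi)=\bigl(\prod_{i=2}^{u}\widehat{g}_{k_i}(\xi)\widehat{L}_{k_i}(\xi)\bigr)\bigl(\widehat{L}_{k_1}(\xi)-\widehat{L}_{k_0}(\xi)\bigr)$ is therefore false, and the ensuing step of ``discarding'' the $g_{k_i}$ via $|\widehat{g}_{k_i}|\leq 1$ has no justification: after the frequency-side convolution the arithmetic (major-arc) structure of each $\widehat{L}_{k_i}$ is smeared out, and controlling that smearing is precisely the content of the lemma. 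If the reduction you propose were available, the statement would be almost trivial and none of Bourgain's machinery would be needed. (A secondary issue: your minor-arc bound $|\widehat{L}_{k_i}(\xi)|\lesssim 2^{-\delta m k_0}$ off the arcs with $q\leq Q$ is also too strong; the contribution of denominators $q>Q$ in Lemma \ref{lemma2.6} decays only polynomially in $Q$, not exponentially in $mk_0$.)

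The paper's proof confronts the product structure directly. Each factor $g_i*L_{k_i}$ is replaced by $\mathscr{F}^{-1}(\Omega_i\widehat{g_i})$, where $\Omega_i$ is a truncated major-arc approximant (denominators up to $2^{l_i}$, frequencies within $D/N_i$ of the rationals); the replacement error is $O(N_i^{-\delta}+2^{-\delta l_i}+D^{-c})$ in $L^2(G)$, while the already-replaced factors are controlled in $L^\infty$ through $\|\mathscr{F}^{-1}(\Omega_i)\|_{\ell^1}\lesssim 2^{(n+2)l_i}$ and $\|g_i\|_\infty\leq 1$. Telescoping these errors, one is left with $\|[\mathscr{F}^{-1}(\widehat{g_2}\Omega_2)\cdots\mathscr{F}^{-1}(\widehat{g_u}\Omega_u)]*(L_{k_1}-L_{k_0})\|_{L^2(G)}$, and the key point is that the Fourier support of this pointwise product lies in the \emph{sumset} $\Gamma$ of the arcs, a $D/N_2$-neighbourhood of rationals $\bfa/q$ with $q\leq 2^{u\,l_u}$. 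On $\Gamma$ one estimates $|\widehat{L_{k_1}}-\widehat{L_{k_0}}|$ using the approximation of Lemma \ref{lemma2.6} together with $|1-\ds(\xi)|\lesssim|\xi|$, giving a bound of the shape $k_0^{C}(D2^{-m}+2^{-\delta m k_0})$, after which $C_2$ and then $C_1$ are chosen to beat $k_0^{-u}$. Your proposal contains none of the three ingredients that make this work (the $L^\infty$ control of the modified factors, the telescoping, and the sumset localization of the frequency support), so as written it does not prove the lemma.
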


\begin{proof}
Set $g_{k_i}=g_i$, and $N_i=2^{mk_i}$ for each $i$. For each $i$ we select  integers $l_i=\alpha_i \log(k_0)$ below for an increasing sequence $\alpha_i$, and  also fix  $D$ with $D =k^{C_2}$, $C_2$ also to be chosen below. 

Define\[
\Omega_i(\xi)=\sum_{l\leq l_i}\sum_{q\in I_l}\sum_{a\in U_q}\sum_{\bfa\in Z_q^n}F_q(a,\bfa)\ds(N_i(\xi-\bfa/q))\zeta(10^l(\xi-\bfa/q))\zeta(N_i/D(\xi-\bfa/q)).\]
Estimate this by removing the $\zeta$ term with $D$, and extending the sum in $l$ to $\infty$:\[
\widehat{\omega_{N_i}}(\xi)-\Omega_i(\xi)=\widehat{\omega_{N_i}}(\xi)-\sum_{l=1}^\infty\sum_{q\in I_l}\sum_{a\in U_q}\sum_{\bfa\in Z_q^n}F_q(a,\bfa)\ds(N_i(\xi-\bfa/q))\zeta(10^l(\xi-\bfa/q))\] 
\[-\sum_{l\leq l_i}\sum_{q\in I_l}\sum_{a\in U_q}\sum_{\bfa\in Z_q^n}F_q(a,\bfa)\ds(N_i(\xi-\bfa/q))\zeta(10^l(\xi-\bfa/q))(1-\zeta(N_i/D(\xi-\bfa/q)))\] 
\[+\sum_{l> l_i}\sum_{q\in I_l}\sum_{a\in U_q}\sum_{\bfa\in Z_q^n}F_q(a,\bfa)\ds(N_i(\xi-\bfa/q))\zeta(10^l(\xi-\bfa/q))|\] 
\eq\label{8.5}\<N_i^{-\delta}+D^{-c}+ 2^{-l_i\delta},\ee
as \[
\widehat{\omega_{N_i}}(\xi)-\sum_{l=1}^\infty\sum_{q\in I_l}\sum_{a\in U_q}\sum_{\bfa\in Z_q^n}F_q(a,\bfa)\ds(N_i(\xi-\bfa/q))\zeta(10^l(\xi-\bfa/q))=O(N_i^{-\delta})\]
by Lemma \ref{lemma2.6};  \[
\sum_{l\leq l_i}\sum_{q\in I_l}\sum_{a\in U_q}\sum_{\bfa\in Z_q^n}F_q(a,\bfa)\ds(N_i(\xi-\bfa/q))\zeta(10^l(\xi-\bfa/q))(1-\zeta(N_i/D(\xi-\bfa/q)))\]\[
\leq\sum_{l\leq l_i}\sum_{q\in I_l}\sup_{\bfa\in U_q^n}q|F_q(a,\bfa)|\,O((N_iD/N_i)^{-c})=O(D^{-c})\]
by Lemmas \ref{lemma2.2} and \ref{lemma2.5}; and \[
\sum_{l> l_i}\sum_{q\in I_l}\sum_{a\in U_q}\sum_{\bfa\in Z_q^n}F_q(a,\bfa)\ds(N_i(\xi-\bfa/q))\zeta(10^l(\xi-\bfa/q))=O(2^{-{\delta l_i}})\] 
holds uniformly in $\xi$.

We have that \[
||\mathscr{F}^{-1}(\Omega_i)||_{\ell^1} \< 2^{(n+2)l_i}\]
by treating the summands independently. 
In turn we then have that \[
|\mathscr{F}^{-1}(\Omega_ig_i)|\<2^{(n+2)l_i}\]
holds uniformly. We also get that   \[
|| (g_i*L_{k_i})-\mathscr{F}^{-1}[\Omega_i\widehat{g_i}]||_{L^2(G)}\< N_i^{-\delta}+ 2^{-l_i\delta}+D^{-c}\]
as a consequence of \eqref{8.5}.

The left hand side of \eqref{8.4} is estimated in the following manner. Begin by writing\[
||[(g_{2}*L_{k_2})...(g_{u}*L_{k_u})]*(L_{k_1}-L_{k_0})||_{L^2(G)}\]\[
\leq||[(g_{2}*L_{k_2}-\mathscr{F}^{-1}(\Omega_2\widehat{g_{2}}))...(g_{u}*L_{k_u})]*(L_{k_1}-L_{k_0})||_{L^2(G)}\]\[
+||\mathscr{F}^{-1}(\Omega_2\widehat{g_{2}})...(g_{u}*L_{k_u})]*(L_{k_1}-L_{k_0})||_{L^2(G)}.\]
The first term is at most \[
||(g_{2}*L_{k_2}-\mathscr{F}^{-1}(\Omega_2\widehat{g_{2}})||_{L^2(G)}.\]
Now repeat this process for the remaining term leading to an overall bound of the form
\[
||(g_2*L_{k_2})-\mathscr{F}^{-1}[\Omega_2\widehat{g_2}]||_{L^2(G)}\]\[
+||\mathscr{F}^{-1}[\Omega_2\widehat{g_2}]||_{L^\infty(G)}||(g_3*L_{k_3})-\mathscr{F}^{-1}[\Omega_3\widehat{g_3}||_{L^2(G)}+...\]
\[
+||\mathscr{F}^{-1}[\Omega_2\widehat{g_2}]||_{L^\infty(G)}...||\mathscr{F}^{-1}[\Omega_{u-1}\widehat{g_{u-1}}]||_{L^\infty(G)}||(g_u*L_{k_u})-\mathscr{F}^{-1}[\Omega_u\widehat{g_u}||_{L^2(G)}\]
\[
+||[\mathscr{F}^{-1}(\widehat{g_2}\Omega_2)...\mathscr{F}^{-1}(\widehat{g_u}\Omega_u)]*(L_{k_1}-L_{k_0})||_{L^2(G)}.
\]
In turn this is bounded above by \[
\sum_{i=2}^u(2^{n+2})^{l_2+...+l_{i-1}}(N_i^{-\delta}+ 2^{-l_{i}\delta}+D^{-c})
+||[\mathscr{F}^{-1}(\widehat{g_2}\Omega_2)...\mathscr{F}^{-1}(\widehat{g_q}\Omega_q)]*(L_{k_1}-L_{k_0})||_{L^2(G)}.\]
Iteratively choosing the $\alpha_i$   gives that the first term, the sum,  is bounded above by \eq\label{8e1}
k_0^{-u}/10 +k_0^C(N_i^{-\delta}+D^{-c}).\ee

Now \[
||[\mathscr{F}^{-1}(\widehat{g_2}\Omega_2)...\mathscr{F}^{-1}(\widehat{g_u}\Omega_u)]*(L_{k_1}-L_{k_0})||_{L^2(G)}\]
is at most \eq\label{8.5}
||[\mathscr{F}^{-1}(\widehat{g_2}\Omega_2)...\mathscr{F}^{-1}(\widehat{g_u}\Omega_u)]||_{L^2(G)}\sup_{\xi\in \Gamma}|\widehat{L_{k_1}}(\xi)-\widehat{L_{k_0}}(\xi)|.\ee
Here  $\Gamma$ is the sumset of the sets $\Gamma_i\subset\Pi^n$ which are $D/(5N_i)$-neighbourhoods of the sets\[
\{a/q:a\in U_q;\, q\leq 2^{l_i}\}.\]
Thus $\Gamma$ is comparable to a $D/N_2$-neighbourhood of the set\[
\{a/q:a\in U_q;\, q\leq 2^{u\,l_u}\}.\]
This of course follows by considering the support of the Fourier transform of $\mathscr{F}^{-1}(\widehat{g_2}\Omega_2)...\mathscr{F}^{-1}(\widehat{g_u}\Omega_u)$.

Now estimate \[
\widehat{L_{k_1}}(\xi)-\widehat{L_{k_0}}(\xi)\]
by \[
\sum_{l=1}^\infty\sum_{q\in I_l}\sum_{a\in U_q}\sum_{\bfa\in Z_q^n} F_q(a,\bfa)\zeta(10^l(\xi-\bfa/q))\ds(N_1(\xi-\bfa/q))\]
\[-\sum_{l=1}^\infty\sum_{q\in I_l}\sum_{a\in U_q}\sum_{\bfa\in Z_q^n} F_q(a,\bfa)\zeta(10^l(\xi-\bfa/q))\ds(N_0(\xi-\bfa/q))\]\[
+O(N_0^{-\delta}).\]
Use that $|1-\ds(\xi)|\< |\xi|$ to get that  $|\ds(N_1(\xi-\bfa/q))-\ds(N_0(\xi-\bfa/q))|\< N_1D/N_2$ on $\Gamma$, and then we have the estimate \[
|\widehat{L_{k_1}}(\xi)-\widehat{L_{k_0}}(\xi)|\< \sum_{q=1}^{\infty}q^{1-c}N_1D/N_2+N_0^{-\delta}\< N_1D/N_2+N_0^{-\delta}.\]

We have that \eqref{8.5} is bounded by  
\[k_0^C(D(N_1/N_2)+N_0^{-\delta}).\]
which is at most 
\eq\label{8e2}k_0^C(D2^{-m}+2^{-\delta m k_0}).\ee 
Then \eqref{8e1} and \eqref{8e2} add to at most
\[k_0^{-u}/10 +k_0^C(N_i^{-\delta}+D^{-c})+k_0^C(D2^{-m}+2^{-\delta m k_0}).\]
Choose $C_2$ so that $k_0^CD^{-c}$ is at most $k_0^{-u}/10$, and then we choose $C_1$ so that \[
k_0^C(D2^{-m}+2^{-\delta m k_0})\leq k_0^{-u}/10\]
and the lemma is proven.
\end{proof}

\bigskip

\vskip0.2in
\noindent \author{\textsc{Brian Cook}}\\
Department of Mathematics \\
Kent State University\\
Kent, OH, USA\\
Electronic address: \texttt{briancookmath@gmail.com}

\end{document}